\newtheorem{thm}{Theorem}[section]
\newtheorem{la}[thm]{Lemma}
\newtheorem{Defn}[thm]{Definition}
\newtheorem{Remark}[thm]{Remark}
\newtheorem{Conj}[thm]{Conjecture}
\newtheorem{prop}[thm]{Proposition}
\newtheorem{Example}[thm]{Example}
\newtheorem{Number}[thm]{\!\!}
\newenvironment{defn}{\begin{Defn}\rm}{\end{Defn}}
\newenvironment{rem}{\begin{Remark}\rm}{\end{Remark}}
\newenvironment{numba}{\begin{Number}\rm}{\end{Number}}
\newenvironment{proof}{{\noindent\bf Proof.}}%
                  {\nopagebreak\hspace*{\fill}$\Box$\medskip\par}
\newcommand{\Punkt}{\nopagebreak\hspace*{\fill}$\Box$}
\newcommand{\wb}{\overline}
\newcommand{\at}{\symbol{'100}}
\newcommand{\mto}{\mapsto}
\newcommand{\isom}{\cong}
\DeclareMathOperator{\Ad}{Ad}
\newcommand{\N}{{\mathbb N}}
\newcommand{\K}{{\mathbb K}}
\newcommand{\bO}{{\mathbb O}}
\newcommand{\Q}{{\mathbb Q}}
\newcommand{\Z}{{\mathbb Z}}
\newcommand{\cg}{{\mathfrak g}}
\newcommand{\ca}{{\mathfrak a}}
\newcommand{\rr}{{\mathfrak r}}
\newcommand{\ce}{{\mathfrak e}}
\newcommand{\cE}{{\mathcal E}}
\newcommand{\cS}{{\mathcal S}}
\DeclareMathOperator{\ad}{ad}
\newcommand{\cs}{{\mathfrak s}}
\newcommand{\ck}{{\mathfrak k}}
\newcommand{\cz}{{\mathfrak z}}
\DeclareMathOperator{\Aut}{Aut}
\newcommand{\sub}{\subseteq}
\DeclareMathOperator{\GL}{GL}
\DeclareMathOperator{\id}{id}
\DeclareMathOperator{\rad}{rad}
\DeclareMathOperator{\Rad}{Rad}
\DeclareMathOperator{\tr}{tr}
\DeclareMathOperator{\erk}{rk}
\DeclareMathOperator{\der}{der}
\DeclareMathOperator{\Inn}{Inn}
\begin{document}
$\;$\\[-32mm]
\begin{center}
{\Large\bf Elementary {\boldmath$p$}-adic Lie groups\vspace{2mm}
have finite construction rank}\\[7mm]
{\bf Helge Gl\"{o}ckner}\vspace{2mm}
\end{center}
\begin{abstract}
\hspace*{-6mm}The class of elementary totally disconnected groups
is the smallest class of totally disconnected, locally compact, second countable groups
which contains all discrete countable groups, all metrizable
pro-finite groups, and is closed under extensions
and countable ascending
unions.
To each elementary group~$G$,
a (possibly infinite) ordinal number $\erk(G)$ can be associated,
its \emph{construction rank}.
By a structure
theorem of Phillip Wesolek,
elementary $p$-adic Lie groups are among the basic building blocks
for general $\sigma$-compact $p$-adic Lie groups.
We characterize elementary
$p$-adic Lie groups in terms of the subquotients
needed to describe them.
The characterization implies
that every
elementary $p$-adic Lie group
has finite construction rank.
Structure theorems concerning general $p$-adic Lie groups
are also obtained.\vspace{2mm}
\end{abstract}
{\bf Classification:} primary 22E20;
% struc of other Lie
secondary 22E35, 22E46, 22E50\\[3mm]
% Ana on p-adic,
% semisimple Lie
% seps of al gps over local fields
%
{\bf Key words:} $p$-adic Lie group, normal series,
topological simplicity, simple group, nearly simple group,
Tits core, elementary group,
scale function, rank, centre, commutator group, subquotient,
building block, finiteness\\[9mm]
\section{\!\!Introduction and statement of main results}
\noindent
By a theorem of Ph.\ Wesolek,
every $\sigma$-compact $p$-adic Lie group~$G$
admits a series
\begin{equation*}
\{1\}\lhd A_1 \lhd A_2 \lhd G
\end{equation*}
of closed subgroups such that
$G/A_2$ is finite,
$A_2/A_1$ is a finite direct product of non-discrete,
compactly generated,
topologically simple $p$-adic Lie groups,
and $A_1$ is elementary (see \cite[Corollary~1.5]{We3}).
In this sense, elementary $p$-adic Lie groups
are among the basic building blocks
for general $\sigma$-compact $p$-adic Lie groups.
It is therefore desirable to improve the understanding
of such groups.\\[2.3mm]
We recall the construction
rank of an elementary group from~\cite[3.1]{We2}.
\begin{defn}
Let $\cE_0$ be the class of all topological groups
which are countable and discrete or metrizable and pro-finite.
If~$\lambda>0$ is an ordinal number and $\cE_\mu$
has been defined for all $\mu<\lambda$,
set $\cE_\lambda:=\bigcup_{\mu<\lambda}\cE_\mu$ if $\lambda$
is a limit ordinal.
If $\lambda$ has a precursor~$\mu$,
let $\cE_\lambda$ be the class of all topological
groups such that
\begin{itemize}
\item[(a)]
$G$ has a closed normal subgroup $N$ such that
$N\in \cE_\mu$ and $G/N\in \cE_0$, or
\item[(b)]
$G=\bigcup_{n\in\N}V_n$ for open subgroups $V_n\sub G$
such that $V_n\in \cE_\mu$ for all $n\in \N$.
\end{itemize}
The \emph{construction rank} $\erk(G)$
of an elementary group~$G$ is defined as the minimum ordinal
$\lambda$ such that $G\in \cE_\lambda$.
\end{defn}
\begin{numba}
If $G$ is a topological group
and $N\lhd G$ a closed normal subgroup
such that both $N$ and $G/N$ are elementary,
then also $G$ is elementary,
with
\begin{equation}\label{umweg}
\erk(G)\leq\erk(N)+\erk(G/N)+1
\end{equation}
(see \cite[Proposition~3.5]{We2}),
using addition of ordinals.
\end{numba}

Given a prime number~$p$, let $\Q_p$
be the field of $p$-adic numbers and $\Z_p$
be the ring of $p$-adic integers.
The $p$-adic Lie groups we consider are finite-dimensional analytic
$p$-adic Lie groups as in~\cite{Ser}.
We call a $p$-adic Lie group~$G$
\emph{linear} if it
admits an injective continuous
homomorphism $G\to \GL_n(\Q_p)$ for some~$n$.
But linearity will not be assumed unless the contrary is stated.
If~$G$ is a $p$-adic Lie group, we write
$L(G):=T_1(G)$ for its Lie algebra
(the tangent space at the identity element $1\in G$).
As usual, $\Ad_G\colon G\to \Aut(L(G))$
is the adjoint representation,
with kernel~$\ker\Ad_G$.
\begin{defn}
We say that a topological group~$G$ is \emph{nearly simple}
if every closed subnormal subgroup of~$G$
is open or discrete.
\end{defn}
\begin{defn}
We say that a $p$-adic Lie group $G$ is
\emph{extraordinary} if $G$ is
nearly simple, $G=\ker \Ad_G$
(whence $L(G)$ is abelian)
but
the commutator group $S'$ is non-discrete
for each open subnormal
subgroup $S\sub G$.
\end{defn}
If $G$ is extraordinary, then $G$ is non-discrete and non-abelian, in particular.
Moreover, $G$ is not compactly generated (see Proposition~\ref{propo4}).

The author does not know whether extraordinary $p$-adic Lie
groups exist (Remark~\ref{probambi}).
Linear Lie groups cannot be extraordinary (Remark~\ref{linambi}).

Since discrete groups need not admit composition series,
also $p$-adic Lie groups need not admit composition series
(in terms of closed subnormal subgroups).
Yet, they admit series with nearly simple subquotients
(Proposition~\ref{propo1}).
Our first main result is a structure theorem
for general $p$-adic Lie groups,
which describes the nearly simple
subquotients needed to build up such groups.
\begin{thm}\label{thmA}
For every $p$-adic Lie group~$G$, there exists
a series
\begin{equation}\label{sers}
G=G_0\rhd G_1\rhd\cdots\rhd G_n=\{1\}
\end{equation}
of closed subgroups of~$G$ such that
each of the subquotients  $Q_j:=G_{j-1}/G_j$ for $j\in \{1,\ldots, n\}$
has one of the following mutually exclusive properties:
\begin{itemize}
\item[\rm(a)]
$Q_j$ is discrete;
\item[\rm(b)]
$Q_j\cong \Z_p$;
\item[\rm(c)]
$Q_j$ is extraordinary;
\item[\rm(d)]
$Q_j$ is isomorphic to a compact open subgroup
of $\Aut(\cs)$ for a simple $p$-adic Lie algebra
$\cs$; or
\item[\rm(e)]
$Q_j$ is topologically simple, compactly generated, non-discrete,
and isomorphic to an open subgroup
of $\Aut(\cs)$ for a simple $p$-adic Lie algebra~$\cs$.
\end{itemize}
A $\sigma$-compact $p$-adic Lie group $G$ is elementary if and only if no $Q_j$ satisfies~{\rm(e)}.
\end{thm}
The preceding characterization implies:
\begin{thm}\label{thmC}
Every elementary $p$-adic Lie group
has finite construction rank.
\end{thm}
\begin{rem}
If $G$ is $\sigma$-compact, then each $Q_j$ is $\sigma$-compact in Theorem~\ref{thmA}.
\end{rem}
\begin{rem}The proof shows that every series (\ref{sers}) of closed subgroups
can be refined to a series as in Theorem~\ref{thmA}.
\end{rem}
\begin{rem}
With regard to Theorem~\ref{thmA}\,(d),
we mention that, by a theorem of Chevalley,
$\Aut(\cs)$ is
an adjoint $\Q_p$-simple $\Q_p$-algebraic group
(but not necessarily isotropic),
for every simple $p$-adic Lie algebra~$\cs$.
\end{rem}
\begin{rem}\label{newwrem}
All of the $p$-adic Lie groups $Q_j$ in Theorem~\ref{thmA}\,(e) are of \emph{adjoint simple type},
i.e., isomorphic to $H(\Q_p)^\dag$ for an adjoint $\Q_p$-simple isotropic $\Q_p$-algebraic
group~$H$ (by \cite[Theorem 4.8]{We3}).
\end{rem}
The next structure theorem (and a sketch of proof) was suggested by the
referee.
\begin{thm}\label{thmref}
Let $G$ be a $p$-adic Lie group, $\cg$ be its Lie algebra
and $\rr:=\rad(\cg)$ be its radical.
Let $K:=\ker\Ad_G$ be the kernel of the adjoint representation
of~$G$ and $R$ be the kernel of the representation of $G$ on $\cs:=\cg/\rr$
induced by~$\Ad_G$. Then $K$ and $R$ are closed, topologically characteristic
subgroups of~$G$
with $K\leq R$, and the following holds:
\begin{itemize}
\item[\rm(a)]
$K$ has an abelian open subgroup~$A$. In particular, if $K$ is $\sigma$-compact,
then~$K$ is elementary of construction rank $\erk(K)\leq 2$.
\item[\rm(b)]
$R$ is the largest closed normal subgroup of~$G$ whose Lie algebra is~$\rr$.
In particular, $R$ has an open soluble subgroup.

Moreover, there is a closed, topologically characteristic subgroup~$R_0$ of~$G$
with $K\leq R_0\leq R$ such that $R_0/K$ is soluble and~$R_0$ is open in~$R$.
In particular, $R/K$ is soluble-by-discrete, and if~$R$ is $\sigma$-compact
then it is elementary of construction rank $\erk(R)\leq 2\ell+3$, where $\ell$
is the derived length of~$R_0/K$.
\item[\rm(c)]
The quotient $G/R$ has an open, topologically characteristic subgroup~$O/R$ of finite index
that splits as a direct product of the form $S_1\times\cdots\times S_d$,
such that the Lie algebra $\cs_i$ of the factor $S_i$ is a simple factor of the semisimple
Lie algebra~$\cs$. Moreover each $S_i$ is either compact, or compactly generated, topologically simple and of
adjoint simple type.
\item[\rm(d)]
If $G$ is $\sigma$-compact, then $G$ is elementary if and only if $G/R$ is compact,
in which case $\erk(G)\leq 2\ell+4$.
\end{itemize}
\end{thm}
Thus
\begin{equation*}
G\rhd O\rhd R\rhd R_0\rhd K\geq A.
\end{equation*}
In particular, Theorem~\ref{thmref}
enables series as in (\ref{sers}) to be chosen such that
all compactly generated, topologically simple
subquotients (which are among the subquotients of $O/R$) appear above the extraordinary ones (which can only appear as subquotients of~$K$).
\section{Terminology, notation and basic facts}
We set
$\N:=\{1,2,\ldots\}$ and $\N_0:=\N\cup\{0\}$.
If $G$ is a group, we write $G'$ for its commutator group
and $G^{(n)}:=(G^{(n-1)})'$
for the terms of the derived series,
with $G^{(0)}:=G$.
We write $\Aut(\cg)$ for the group
of automorphisms of
a Lie algebra~$\cg$,
and define $\ad(x)\colon \cg\to\cg$, $y\mto [x,y]$
for $x\in \cg$.
An automorphism of a topological group~$G$
is a group automorphism which is a homeomorphism.
We let $\Aut(G)$ be the group of all automorphisms of the topological group~$G$
and we write $\Inn(G):=\{I_g\colon g\in G\}$ for the group
of all inner automorphisms
$I_g\colon G\to G$, $x\mto gxg^{-1}$.
We recall the adjoint representation:
If $G$ is a $p$-adic Lie
group and $g\in G$,
then $\Ad_G(g):=L(I_g):=T_1(I_g)$
is the Lie algebra automorphism associated to the inner automorphism~$I_g$
(the tangent map of~$I_g$ at~$1$).
Then $\Ad_G\colon G\to\Aut(L(G))$ is
continuous and $L(\Ad_G)(x)=\ad(x)$ for each $x\in L(G)$
(see~\cite{Bou}).
Thus $\ad(x)=0$ for all $x\in L(G)$ and thus $L(G)$ is abelian
whenever $\Ad_G(g)=\id_{L(G)}$ for all $g\in G$
(e.g., if~$G$ is extraordinary).
If $G$ is understood,
we also write $\Ad_g:=\Ad_G(g)$ for $g\in G$.
As usual, a topological group~$G$ is called
\emph{compactly generated} if it has a compact generating set.
It is called \emph{topologically simple} if $G\not=\{1\}$
and~$G$ does not have closed normal subgroups
other than~$\{1\}$ and~$G$.
A subgroup $H\sub G$ is called
\emph{topologically characteristic}
if $\alpha(H)=H$ for each automorphism~$\alpha$
of the topological group~$G$,
while $H$
is called \emph{characteristic}
if $\alpha(H)=H$
for each automorphism $\alpha$
of $G$ as an abstract group.

As usual, the radical $\rad(\cg)$ of a finite-dimensional Lie algebra~$\cg$
is its largest soluble ideal.
The following ad hoc notion of a radical of a group (stimulated by \cite{Clu})
is a useful device for our discussions (although it may be of limited interest
elsewhere, cf.\ Remark~\ref{pathorad}).

\begin{defn}\label{adhocrad}
The \emph{radical} $\Rad(G)$
of a group~$G$ is defined as the union of all normal,
soluble subgroups of~$G$.
\end{defn}

By a totally disconnected
local field, we mean a totally disconnected,
locally compact, non-discrete topological
field~$\K$ (e.g., $\K=\Q_p$).
We shall use a fact which
is immediate from classical work by H. Bass:
\begin{la}\label{genparreau}
Let $\K$ be a totally disconnected local field,
$\bO\sub\K$ be the maximal compact subring,
$n\in \N$
and $G\sub \GL_n(\K))$ be a subgroup
satisfying {\rm(a)} and {\rm(b)}:
\begin{itemize}
\item[{\rm(a)}]
Every $g\in G$ generates a relatively compact
subgroup of $\GL_n(\K)$;
\item[{\rm(b)}]
If $V\sub\K^n$ is a $G$-invariant vector subspace,
then $V=\K^n$ or $V=\{0\}$.
\end{itemize}
Then there exists $x \in \GL_n(\K)$
such that $xGx^{-1}\sub \GL_n(\bO)$.
\end{la}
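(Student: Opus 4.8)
The plan is to reduce the assertion to producing a single $G$-invariant $\bO$-lattice $L\sub\K^n$, and then to conjugate. Recall that the stabiliser in $\GL_n(\K)$ of an $\bO$-lattice $L$ (a finitely generated $\bO$-submodule with $\K L=\K^n$) is conjugate to $\GL_n(\bO)$: if $f_1,\dots,f_n$ is an $\bO$-basis of $L$ and $x\in\GL_n(\K)$ is the $\K$-linear map with $x(f_i)=e_i$ for the standard basis vectors $e_i$, then $xL=\bO^n$, hence $x\,\mathrm{Stab}_{\GL_n(\K)}(L)\,x^{-1}=\mathrm{Stab}_{\GL_n(\K)}(\bO^n)=\GL_n(\bO)$. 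So it suffices to find an $\bO$-lattice $L$ with $gL=L$ for every $g\in G$.

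The first and decisive step is to show that the closure $\wb{G}$ of $G$ in $\GL_n(\K)$ is compact; this is where the classical work of H.\ Bass enters, and I sketch the idea. By~(a), for each $g\in G$ the cyclic group $\{g^k:k\in\Z\}$ is relatively compact, hence bounded in $M_n(\K)$; an eigenvalue $\lambda$ of $g$ (in an algebraic closure of $\K$) with $|\lambda|>1$ would make the powers of $g$ unbounded, and one with $|\lambda|<1$ would do the same for $g^{-1}$, so every eigenvalue of every $g\in G$ is a unit. Hence the characteristic polynomial of each $g\in G$ lies in $\bO[X]$ with unit constant term; in particular $\tr(g)\in\bO$, and since $gg'\in G$ whenever $g,g'\in G$, also $\tr(gg')\in\bO$. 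By~(b), $\K^n$ is a faithful simple module over the finite-dimensional $\K$-algebra $R:=\K[G]\sub M_n(\K)$; being semisimple with a faithful simple module, $R$ is a simple $\K$-algebra, and the trace form $R\times R\to\K$, $(a,b)\mto\tr(ab)$, is non-degenerate (at least in characteristic~$0$; see the final paragraph). Put $A:=\bO[G]$, the $\bO$-subalgebra of $R$ generated by~$G$; then $\tr(gg')\in\bO$ gives $\tr(AA)\sub\bO$, so $A\sub A^{\sharp}:=\{r\in R:\tr(rA)\sub\bO\}$. Choosing a $\K$-basis of $R$ consisting of elements of $A$ (possible since $\K A=R$), non-degeneracy of the trace form exhibits $A^{\sharp}$ as an $\bO$-submodule of a finitely generated free $\bO$-module; as $\bO$ is Noetherian, $A^{\sharp}$, and a fortiori $A$, is a finitely generated $\bO$-module, hence bounded in $M_n(\K)$. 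Therefore $G\sub A$ is bounded and $\wb{G}$ is compact.

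Once $\wb{G}$ is known to be compact, the rest is routine. The set $\wb{G}\cdot\bO^n$ is the continuous image of the compact space $\wb{G}\times\bO^n$, hence compact and therefore bounded, so $\wb{G}\cdot\bO^n\sub a\bO^n$ for some $a\in\K^{\times}$. Let $L$ be the $\bO$-submodule of $\K^n$ generated by $\wb{G}\cdot\bO^n$. Then $\bO^n\sub L\sub a\bO^n$, so $L$ is finitely generated over $\bO$ and $\K L=\K^n$, i.e.\ $L$ is an $\bO$-lattice; and $gL=L$ for all $g\in G$ because $g\wb{G}=\wb{G}$. By the first paragraph, conjugating $G$ by any $x\in\GL_n(\K)$ with $xL=\bO^n$ places $xGx^{-1}$ inside $\GL_n(\bO)$.

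I expect the main obstacle to be the relative compactness of $G$ — equivalently, the non-degeneracy of the trace form of the simple algebra $R=\K[G]$ acting on $\K^n$. Over $\Q_p$, the case relevant to the present paper, this is immediate: the matrix trace restricted to $R$ is, up to a positive integer factor, the composite of the reduced trace $R\to Z(R)$ with the field trace $Z(R)\to\Q_p$, and in characteristic~$0$ all of these are non-degenerate. In equal characteristic one must rule out inseparability (a division algebra of degree divisible by $p$, or an inseparable centre, can make the trace form vanish), and there the argument has to be run more carefully along the lines of Bass's original treatment; this is precisely the classical input that the lemma attributes to him.
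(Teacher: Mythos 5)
Your argument is correct (modulo the characteristic-$0$ caveat you flag), but it takes a genuinely different, more self-contained route than the paper. The paper's proof is very short and leans on two external citations: it establishes $|\tr(g)|\leq 1$ for each $g\in G$ by first invoking Serre's theorem to conjugate the compact group $\wb{\langle g\rangle}$ into $\GL_n(\bO)$, then cites Bass's Corollary~1.3\,(c) as a black box to deduce from boundedness of $\tr(G)$ (together with hypothesis~(b)) that $G$ is relatively compact, and finally invokes Serre once more to conjugate the compact closure into $\GL_n(\bO)$. You instead (i) obtain $\tr(g)\in\bO$ directly by an eigenvalue argument, (ii) reprove the Bass-type implication ``bounded traces plus irreducibility implies bounded group'' from scratch via the discriminant/dual-lattice argument for the simple algebra $\K[G]$, and (iii) build the invariant lattice $\bO[\wb G]\cdot\bO^n$ by hand rather than citing Serre. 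What the paper's approach buys is brevity and, via Bass, full generality over any totally disconnected local field; what yours buys is transparency about \emph{why} the result holds, at the cost of needing the non-degeneracy of the trace form, which you correctly note can fail in positive characteristic — so as written your argument covers only the characteristic-$0$ case (which is all the paper actually uses, since the lemma is applied to $\Q_p$). Your closing paragraph accurately identifies that the inseparability issue in equal characteristic is exactly the part one would have to import from Bass.

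One small point worth making explicit in step (i): from boundedness of $\{g^k:k\in\Z\}$ you conclude all eigenvalues of $g$ (in an algebraic closure $\wb\K$ with the unique extended absolute value) are units, and then the ultrametric inequality applied to the elementary symmetric functions gives the characteristic polynomial in $\bO[X]$; this is fine, but you should say that the extension of $|\cdot|$ to $\wb\K$ is unique so that ``$|\lambda|>1$ forces unbounded powers'' is unambiguous. This is standard and does not affect correctness.
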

\begin{proof}
Let $\tr\colon M_n(\K)\to\K$
be the trace on the algebra of $n\times n$-matrices,
and $|.|$ be an ultrametric absolute value on~$\K$
defining its topology.
For each $g\in G$,
the subgroup $K:=\wb{\langle g\rangle}$ of $\GL_n(\K)$
is compact by hypothesis.
Hence $K$ is conjugate to a subgroup of $\GL_n(\bO)$ (see Theorem~1
in Appendix~1 to \cite[Part~II, Chapter IV]{Ser}),
say $xKx^{-1}\sub \GL_n(\bO)$.
As a consequence, $|\tr(g)|=|\tr(xgx^{-1})|\leq 1$.
Thus $\tr(G)$ is bounded,
whence $G$ is relatively compact by \cite[Corollary 1.3\,(c)]{Bas}.
Therefore $G$ is conjugate to a subgroup of $\GL_n(\bO)$
(using \cite{Ser} again).
\end{proof}
\begin{la}\label{lemrad}
For every $p$-adic Lie group~$G$, its radical
$\Rad(G)$ is a closed,\linebreak
characteristic
subgroup of~$G$
whose Lie algebra $L(\Rad(G))$ is contained in the radical $\rad(\cg)$ of the Lie algebra $\cg:=L(G)$.
Moreover, $G$ has a soluble closed normal subgroup~$N$
which is open in~$\Rad(G)$.
\end{la}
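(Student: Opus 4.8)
The plan is to reduce everything to a single \emph{closed} normal soluble subgroup of maximal dimension and to control all other normal soluble subgroups near the identity by means of a standard compact open subgroup of~$G$.

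First I would record the elementary structural facts. The product $N_1N_2$ of two normal soluble subgroups of~$G$ is again normal and soluble (it is a subgroup since $N_1\lhd G$, and $N_1N_2/N_1\cong N_2/(N_1\cap N_2)$ is soluble), so the family of normal soluble subgroups is directed and $R(G)=\bigcup N$ is a normal subgroup; it is characteristic because every abstract automorphism of~$G$ permutes the normal soluble subgroups. Since $[\overline A,\overline A]\sub\overline{[A,A]}$ in any topological group, the closure $\overline N$ of a normal soluble~$N$ is again normal and soluble, so $R(G)$ is the directed union of the \emph{closed} normal soluble subgroups of~$G$. Each closed normal soluble~$N$ is a $p$-adic Lie group with $L(N)\sub L(G)$; passing to Lie algebras along the closed derived series $N=N_{[0]}\rhd N_{[1]}\rhd\cdots$ (with $N_{[i+1]}:=\overline{[N_{[i]},N_{[i]}]}$ and abelian consecutive quotients) shows that $L(N)$ is solvable, and normality of~$N$ forces $\Ad_G(G)$ to stabilise $L(N)$, hence $[L(G),L(N)]\sub L(N)$ (as $L(\Ad_G)(x)=\ad x$). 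Thus $L(N)$ is a solvable ideal of $L(G)$, so $L(N)\sub\rr$.

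Next I would fix $N_0$ a closed normal soluble subgroup with $\dim L(N_0)$ maximal (a maximum exists, being bounded by $\dim G$). For any closed normal soluble~$N$, the group $\overline{N_0N}$ is closed, normal and soluble with $L(\overline{N_0N})\supseteq L(N_0)+L(N)$, so maximality gives $L(\overline{N_0N})=L(N_0)$ and therefore $L(N)\sub L(N_0)$. Now fix a \emph{standard} compact open subgroup $V_0=\exp(\Lambda)$ of~$G$, where $\Lambda\sub L(G)$ is a $\Z_p$-Lie sublattice on which $\exp$ is a homeomorphism onto~$V_0$ with inverse~$\log$ (such~$V_0$ exist by \cite{Bou}). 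The point is that, for this fixed~$V_0$ and \emph{every} closed subgroup $H\sub G$, one has $\log(H\cap V_0)=\Lambda\cap L(H)$: if $h\in H\cap V_0$ then $\overline{\langle h\rangle}=\{\exp(t\log h):t\in\Z_p\}\sub H$ forces $\log h\in L(H)$, and conversely if $y\in\Lambda\cap L(H)$ then $\{t\in\Z_p:\exp(ty)\in H\}$ is a closed subgroup of~$\Z_p$ with nonempty interior, hence all of~$\Z_p$, so $\exp y\in H$. Applying this to~$N$ and to~$N_0$ and using $L(N)\sub L(N_0)$ gives $N\cap V_0\sub N_0\cap V_0\sub N_0$ for every closed normal soluble~$N$.

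Finally I would conclude. Taking the union over all closed normal soluble~$N$ yields $R(G)\cap V_0=N_0\cap V_0$, which is an open subgroup of~$R(G)$ contained in~$N_0$; hence $N_0$ is open in~$R(G)$, and $N_0$ is the soluble closed normal subgroup required by the statement. For closedness of $R(G)$: if $g\in\overline{R(G)}$, then $gV_0$ meets $R(G)$, say $gv\in R(G)$ with $v\in V_0$; then $v=g^{-1}(gv)$ lies in the subgroup $\overline{R(G)}$, so $v\in\overline{R(G)}\cap V_0=\overline{R(G)\cap V_0}=\overline{N_0\cap V_0}=N_0\cap V_0\sub R(G)$ (using that $V_0$ is clopen and $N_0\cap V_0$ is compact, hence closed), whence $g=(gv)v^{-1}\in R(G)$. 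Thus $R(G)$ is closed, so it is a $p$-adic Lie group and $L(R(G))=L(N_0)\sub\rr$. The one genuinely delicate point is the uniformity in $\log(H\cap V_0)=\Lambda\cap L(H)$ — that a \emph{single} standard~$V_0$ simultaneously detects all closed subgroups near~$1$; the rest is bookkeeping with directedness and the closed derived series.
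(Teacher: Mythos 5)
Your key technical step --- the ``uniformity'' claim that a single standard $V_0=\exp(\Lambda)$ satisfies $\log(H\cap V_0)=\Lambda\cap L(H)$ for \emph{every} closed subgroup $H$ --- is false, and it is the linchpin of your argument. The forward inclusion $\log(H\cap V_0)\subseteq\Lambda\cap L(H)$ is fine (the one-parameter-subgroup argument works), but the reverse fails. The error is in the deduction ``a closed subgroup of $\Z_p$ with nonempty interior, hence all of $\Z_p$'': a closed subgroup of $\Z_p$ with nonempty interior is $p^j\Z_p$ for some $j\geq 0$, which need not be all of $\Z_p$. A concrete counterexample to the claim: take $G=(\Q_p,+)$, so $\exp=\id$, and take $\Lambda=\Z_p$, $V_0=\Z_p$, $H=p\Z_p$. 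Then $L(H)=\Q_p$ and $\Lambda\cap L(H)=\Z_p$, but $\log(H\cap V_0)=p\Z_p$. Without the uniformity claim you only obtain $N\cap V_0\subseteq\exp(\Lambda\cap L(N_0))$, and $\exp(\Lambda\cap L(N_0))$ need not lie in $N_0$, so the inclusions $N\cap V_0\subseteq N_0$ and $R(G)\cap V_0=N_0\cap V_0$ do not follow.

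Two repairs are available. The simpler one, which is what the paper does, avoids the Lie algebra at this point: a compact open subgroup $U$ of a $p$-adic Lie group is a compact $p$-adic analytic group and hence satisfies the ascending chain condition on closed subgroups, so the directed family $\{N\cap U : N\in\cS\}$ of closed subgroups of $U$ stabilizes at some $N_0\cap U$, giving $R(G)\cap U=N_0\cap U$ directly; local closedness then gives closedness of $R(G)$, openness of $N_0$ in $R(G)$, and $L(R(G))=L(N_0)\subseteq\rr$ follows as you argue. Alternatively, your argument can be saved by reversing the order of quantifiers: first choose $N_0$ of maximal Lie algebra dimension (as you do), and \emph{then} shrink $V_0$ so that $\exp$ maps $\Lambda\cap L(N_0)$ into $N_0$ --- this is possible because $N_0$ is closed, hence a Lie subgroup, so $\exp$ carries a small neighbourhood of $0$ in $L(N_0)$ into $N_0$. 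With that $N_0$-dependent choice of $V_0$, one gets $N\cap V_0\subseteq\exp(\Lambda\cap L(N))\subseteq\exp(\Lambda\cap L(N_0))\subseteq N_0$ for every closed normal soluble $N$, and the rest of your bookkeeping closes the argument. But the uniformity claim as you state it does not hold, even for normal $H$.
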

\begin{proof}
Since $G$ is Hausdorff, the closure
$\wb{N}$ is a soluble normal subgroup of~$G$ for
each soluble normal subgroup~$N$ of~$G$
(see \cite[Chapter 3, \S9, no.\,1, Corollary 1]{Bou}).
Therefore $R(G)$ is the union of all soluble, normal,
\emph{closed} subgroups of~$G$.
If $N_1$ and $N_2$ are soluble, closed normal
subgroups of~$G$, then $N_1N_2$
is soluble as $N_1N_2/N_2\cong N_1/(N_1\cap N_2)$
and $N_2$ are soluble.
Hence also $\wb{N_1N_2}$ is soluble (as just recalled).
The set $\cS$ of soluble, closed normal subgroups of~$G$
is therefore directed under inclusion.
Since every $p$-adic Lie group
has a compact, open subgroup~$U$
which satisfies an ascending chain condition
on closed subgroups,
$N\cap U$ must become stationary,
showing that $U\cap\bigcup_{N\in\cS}N$ is closed.
Being locally closed by the preceding,
the subgroup $\Rad(G):=\bigcup_{N\in\cS}N$
of~$G$ is closed.
It is immediate from the definition that $\Rad(G)$
is characteristic.
If we choose $N\in \cS$ such that $U\cap \Rad(G)=U\cap N$,
then~$N$ is open in~$\Rad(G)$.
Hence $L(\Rad(G))=L(N)$,
which is a soluble ideal (and hence contained in $\rad(\cg)$)
as~$N$ is normal and soluble (cf.\ \cite[Chapter III, \S9, no.\,2, Proposition 3]{Bou}).
\end{proof}
We also need a variant of \cite[Lemma 6.4]{Clu}.\footnote{The
lemma in~\cite{Clu} only tackles $p$-adic Lie groups
that are closed subgroups of~$\GL_n(\Q_p)$.}
\begin{la}\label{radLie}
Let $G$ be a linear $p$-adic Lie group.
Then the radical $\Rad(G)$
is soluble
and its Lie algebra $L(\Rad(G))$
coincides with the radical~$\rad(\cg)$
of $\cg:=L(G)$.
\end{la}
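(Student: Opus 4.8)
The plan is to reduce to the case of closed subgroups of $\GL_n(\Q_p)$, which is \cite[Lemma~6.4]{Clu}, by passing to the closure of $G$ in $\GL_n(\Q_p)$. Fix an injective continuous homomorphism $\iota\colon G\to\GL_n(\Q_p)$ and set $\mathcal{G}:=\wb{\iota(G)}$, a closed subgroup of $\GL_n(\Q_p)$ and hence a $p$-adic Lie group (a closed subgroup of a $p$-adic Lie group is a Lie subgroup, see \cite{Ser}). Being continuous, $\iota$ is analytic (\cite{Ser}) and has a tangent map $L(\iota)\colon L(G)\to\mathfrak{gl}_n(\Q_p)$; since $\iota$ is injective and $\exp$ is a local diffeomorphism at~$0$, any $x\in\ker L(\iota)$ would satisfy $\iota(\exp_G(tx))=\exp(t\,L(\iota)(x))=1$ for small $|t|$, forcing $x=0$, so $L(\iota)$ is injective. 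As $\exp(t\,L(\iota)(x))\in\iota(G)\sub\mathcal{G}$ for $x\in L(G)$ and small $|t|$, identifying $L(G)$ with its image yields $L(G)\sub L(\mathcal{G})\sub\mathfrak{gl}_n(\Q_p)$; we likewise identify $G$ with the dense subgroup $\iota(G)$ of~$\mathcal{G}$.

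First I would show that $L(G)$ is an \emph{ideal} of $L(\mathcal{G})$. By functoriality of the adjoint representation, $\Ad_{\mathcal{G}}(g)$ restricts on $L(G)$ to $\Ad_G(g)$ for every $g\in G$; hence $G$ is contained in the closed subgroup $N:=\{g\in\mathcal{G}:\Ad_{\mathcal{G}}(g)(L(G))=L(G)\}$, and by density $N=\mathcal{G}$. Differentiating, $L(\mathcal{G})=L(N)=\{x\in L(\mathcal{G}):\ad(x)(L(G))\sub L(G)\}$, so $L(G)\lhd L(\mathcal{G})$. Since the radical of an ideal is the intersection of the ideal with the radical of the ambient Lie algebra (the radical of $L(G)$ is a characteristic ideal of~$L(G)$, hence an ideal of~$L(\mathcal{G})$, and is soluble; conversely $L(G)\cap\mathrm{rad}(L(\mathcal{G}))$ is a soluble ideal of~$L(G)$), this gives $\rr=\mathrm{rad}(L(G))=L(G)\cap\mathrm{rad}(L(\mathcal{G}))$.

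Next I would identify the radicals of the \emph{groups}. If $M\lhd G$ is soluble, then its closure $\wb{M}$ in $\mathcal{G}$ is soluble and, since $G$ normalizes~$M$ and is dense in~$\mathcal{G}$, is normal in~$\mathcal{G}$; hence $M\sub\wb{M}\sub R(\mathcal{G})$, and taking the union over all such~$M$ gives $R(G)\sub G\cap R(\mathcal{G})$. Conversely $G\cap R(\mathcal{G})$ is a normal subgroup of~$G$, and it is soluble because $R(\mathcal{G})$ is soluble by \cite[Lemma~6.4]{Clu} (applicable as $\mathcal{G}$ is a closed subgroup of $\GL_n(\Q_p)$); hence $G\cap R(\mathcal{G})\sub R(G)$. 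Thus $R(G)=G\cap R(\mathcal{G})$ is soluble, which is the first assertion. As $R(G)$ is closed in~$G$ and $R(\mathcal{G})$ is closed in~$\mathcal{G}$ (Lemma~\ref{lemrad}), computing Lie algebras of these closed subgroups via the exponential map gives $L(R(G))=L(G)\cap L(R(\mathcal{G}))$; and $L(R(\mathcal{G}))=\mathrm{rad}(L(\mathcal{G}))$ by \cite[Lemma~6.4]{Clu}, so $L(R(G))=L(G)\cap\mathrm{rad}(L(\mathcal{G}))=\rr$ by the preceding paragraph, which is the second assertion.

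The argument has no isolated deep step: everything of substance is imported from \cite[Lemma~6.4]{Clu}, and the work lies in the Lie-theoretic bookkeeping --- verifying that $L(\iota)$ is injective (so $L(G)$ really is a subalgebra of $L(\mathcal{G})$), that $L(G)$ is in fact an \emph{ideal} there (this is where density of~$G$ in~$\mathcal{G}$ is essential, and it is the point I expect to require the most care), and that passing to Lie algebras is compatible with the intersection $G\cap R(\mathcal{G})$.
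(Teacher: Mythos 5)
Your proof is correct, and it takes a genuinely different route from the paper's. The paper proves $L(R(G))=\rr$ by \emph{re-running} the argument of \cite[Lemma~6.4]{Clu} on the (not necessarily closed) linear group~$G$ itself, substituting Lemma~\ref{opensolvable} for \cite[Lemma~6.1]{Clu} at the point where closedness would otherwise be used; only for the solubility of~$R(G)$ does it pass to the closure $\wb{G}$ in $\GL_n(\Q_p)$. You instead treat \cite[Lemma~6.4]{Clu} entirely as a black box, applied to the closure $\mathcal{G}=\wb{\iota(G)}$, and transfer both conclusions back to~$G$ by density bookkeeping: $L(G)\lhd L(\mathcal{G})$ via the closed stabilizer of the subspace $L(G)$ under $\Ad_{\mathcal{G}}$, then $\rr=L(G)\cap\mathrm{rad}(L(\mathcal{G}))$ from the standard fact $\mathrm{rad}(\mathfrak{i})=\mathfrak{i}\cap\mathrm{rad}(\mathfrak{g})$ for an ideal $\mathfrak{i}\lhd\mathfrak{g}$, and $R(G)=G\cap R(\mathcal{G})$ by a two-sided density/normality argument. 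Your approach buys a self-contained transfer principle and avoids needing to know the internal structure of the proof in~\cite{Clu}; the paper's approach is shorter on the page (because it delegates the verification to the reader) and isolates exactly which single lemma of~\cite{Clu} has to be replaced to drop closedness. One cosmetic point: where you say $\rr$ is a characteristic ideal of $L(G)$ ``hence an ideal of $L(\mathcal{G})$,'' what is really needed is that the radical is invariant under all \emph{derivations} (applied to $\ad(x)|_{L(G)}$ for $x\in L(\mathcal{G})$), not merely under automorphisms; the conclusion is the standard one and is correct, but the wording ``characteristic'' slightly understates the input. Likewise, your final identification $L(R(G))=L(G)\cap L(R(\mathcal{G}))$ is stated tersely; the clean way to justify it is via the exponential together with the fact that $R(G)=\iota^{-1}(R(\mathcal{G}))$ is closed in~$G$, as you indicate.
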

\begin{proof}
We may assume that $G\sub\GL_n(\Q_p)$
for some $n$ (such that the Lie group topology on~$G$
is finer than the induced topology).
Using the next lemma,
we can copy the proof of \cite[Lemma~6.4]{Clu}
to see that $L(\Rad(G))=\rad(\cg)$. For every soluble closed normal subgroup
$N\sub G$, the closure $\wb{N}$ in $\GL_n(\Q_p)$
is normal in~$\wb{G}$ and soluble.
Hence $\Rad(G)\sub \Rad(\wb{G})$.
Since $\Rad(\wb{G})$ is soluble (see \cite[p.\,220]{Clu}),
we deduce that also~$\Rad(G)$ is soluble.
\end{proof}
\begin{la}\label{opensolvable}
Let $\K$ be an infinite field, $n\in \N$ and
$G\sub \GL_n(\K)$
be a subgroup, endowed with a topology making it a topological group\,\footnote{The proof
only uses that all inner automorphisms of~$G$ are continuous.}
which has an open subgroup which is soluble $($resp., nilpotent$)$.
Then~$G$ has an open \emph{normal} subgroup
which is soluble $($resp., nilpotent$)$.
\end{la}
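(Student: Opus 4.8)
The plan is to replace~$U$ by a suitable conjugation-invariant soluble (resp.\ nilpotent) overgroup obtained from Zariski closures inside $\GL_n(\K)$, using that the Zariski topology on $\GL_n(\K)$ is Noetherian to turn an a~priori infinite intersection of conjugates into a finite one. So let $U\sub G$ be an open subgroup which is soluble (resp.\ nilpotent), and let $A$ be the Zariski closure of~$U$ in $\GL_n(\K)$; then~$A$ is a Zariski-closed subgroup. I would first recall the standard fact that~$A$ is again soluble (resp.\ nilpotent): writing $\wb{[U,U]}$ for the Zariski closure of the abstract commutator subgroup, for each fixed $h\in U$ the morphism $\GL_n(\K)\to\GL_n(\K)$, $x\mto hxh^{-1}x^{-1}$, maps~$U$ into the closed set $\wb{[U,U]}$, hence maps $A=\wb U$ into $\wb{[U,U]}$; applying the same closedness argument in the second variable then gives $[A,A]\sub\wb{[U,U]}$, and iterating along the derived series (resp.\ the lower central series) yields $A^{(d)}=\{1\}$ as soon as $U^{(d)}=\{1\}$ (resp.\ the corresponding statement for nilpotency).

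Next I would form $B:=\bigcap_{g\in G}gAg^{-1}$, the intersection of all $G$-conjugates of~$A$. Since the coordinate ring of $\GL_n(\K)$ is Noetherian, its Zariski topology satisfies the descending chain condition on closed subsets, so among the finite subintersections $\bigcap_{g\in F}gAg^{-1}$ (with $F\sub G$ finite) there is a minimal one, say $B_0=\bigcap_{i=1}^k g_iAg_i^{-1}$; minimality forces $B_0\sub gAg^{-1}$ for every $g\in G$, whence $B_0=B$. In particular~$B$ is Zariski-closed, $B\sub A$ (take $g=1$), so~$B$ is soluble (resp.\ nilpotent), and~$B$ is invariant under conjugation by every element of~$G$, since $hBh^{-1}=\bigcap_{g\in G}(hg)A(hg)^{-1}=B$ for all $h\in G$.

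Finally I would put $N:=B\cap G=\bigcap_{i=1}^k\bigl(g_iAg_i^{-1}\cap G\bigr)$. Each inner automorphism $x\mto g_ixg_i^{-1}$ of~$G$ is continuous, hence a homeomorphism (its inverse is again an inner automorphism), so $g_iUg_i^{-1}$ is open in~$G$; as $U\sub A$ we have $g_iUg_i^{-1}\sub g_iAg_i^{-1}\cap G$, so $g_iAg_i^{-1}\cap G$ is a subgroup of~$G$ containing an open subgroup, hence open. Consequently~$N$, a finite intersection of open subgroups of~$G$, is open in~$G$; it is normal because $hNh^{-1}=hBh^{-1}\cap G=B\cap G=N$ for all $h\in G$; and it is soluble (resp.\ nilpotent), being a subgroup of~$B$. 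This is the required subgroup.

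The step that carries the weight is the passage from $\bigcap_{g\in G}gAg^{-1}$ to a finite subintersection: it is what makes~$N$ open --- note that the full normal core $\bigcap_{g\in G}gUg^{-1}$ of~$U$ in~$G$ need not be open --- and it is precisely here that the linear structure is exploited, via the Noetherianity of the Zariski topology on $\GL_n(\K)$ together with the (standard, but not entirely trivial) fact that Zariski closure preserves solubility and nilpotency.
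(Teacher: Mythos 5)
Your proof is correct and follows the same Zariski-closure strategy that underlies the cited Lemma~6.1 of Cluckers--Cornulier--Louvet--Tessera--Valette, which the paper simply invokes without reproducing: pass to the Zariski closure $A$ of the open soluble (resp.\ nilpotent) subgroup, use that closure preserves solubility/nilpotency, and exploit Noetherianity of the Zariski topology to replace $\bigcap_{g\in G}gAg^{-1}$ by a finite subintersection, so that its trace on $G$ is an open normal soluble (resp.\ nilpotent) subgroup. All the individual steps (closure of a subgroup is a subgroup, $[\overline{U},\overline{U}]\subseteq\overline{[U,U]}$ and its iteration, the minimal finite subintersection argument, and the use of continuity of inner automorphisms to make each $g_iAg_i^{-1}\cap G$ open) are correctly justified, so this is a valid self-contained substitute for the reference.
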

\begin{proof}
We can copy the proof of~\cite[Lemma~6.1]{Clu}.
\end{proof}
\begin{rem}\label{pathorad}
As we do not restrict our attention to $p$-adic Lie groups which are linear,
we have to face certain difficulties:
\begin{itemize}
\item[(a)]
The radical $\Rad(G)$
of a $p$-adic Lie group (in the ad hoc sense of Definition~\ref{adhocrad})
need not be soluble.
For example, for each $n\in\N$
we can choose a soluble group~$H_n$
with $(H_n)^{(n)}\not=\{1\}$
and define $G$ as the group
$\bigoplus_{n\in\N}H_n$ of all $(h_n)_{n\in\N}\in\prod_{n\in\N}H_n$
such that $h_n=1$ for all but finitely many~$n$
(endowed with the discrete topology).
\item[(b)]
If $G$ has a closed normal subgroup~$N$
such that $\Rad(N)$ is not soluble,
then it is not clear whether $\Rad(N)\leq \Rad(G)$.
\item[(c)]
If $\Rad(G)$ is not soluble,
then it is not clear if $G/\Rad(G)$
has trivial radical.
\item[(d)]
If $G$ is not linear,
then $L(\Rad(G))$ need not coincide with the radical of $L(G)$.
See \cite[Theorem]{EXA}
for an example with abelian Lie algebra $L(G)\not=\{0\}$
but $\Rad(G)=\{1\}$.
\end{itemize}
In the case of a $p$-adic Lie group~$G$,
the closed subgroup~$R\lhd G$ from Theorem~\ref{thmref}
may be a farther-reaching notion of radical.
\end{rem}
\begin{numba}\label{perm}
The class of elementary totally disconnected groups
has the following permanence properties:
\emph{Closed subgroups of elementary groups
and Hausdorff quotients of elementary groups are
elementary}~\cite[Theorem 3.18]{We2}.
\end{numba}
\section{Structural results on $p$-adic Lie groups}
In this section,
we provide auxiliary results on $p$-adic Lie groups
which will help us to prove the main theorems.
\begin{defn}
We say that a topological group $G$
is \emph{nearly simple} if
every closed subnormal subgroup $S\sub G$
is open in~$G$ or discrete.
\end{defn}
In this section, we prove four
propositions. The first one provides
series which can later be refined
further to obtain Theorems~A and~B.
\begin{prop}\label{propo1}
Every $p$-adic Lie group $G$ admits a
series
\begin{equation*}
G=G_0\rhd G_1\rhd \cdots\rhd  G_n=\{1\}
\end{equation*}
of closed subgroups $G_j$
such that $G_{j-1}/G_j$ is nearly
simple for all $j\in \{1,\ldots, n\}$.
\end{prop}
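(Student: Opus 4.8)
The plan is to proceed by induction on the dimension of $G$ and, within a fixed dimension, by a secondary induction measuring how far $G$ is from being nearly simple. First I would dispose of the trivial case $\dim G = 0$: then $G$ is discrete, hence nearly simple (every subgroup is discrete), so the series $G \rhd \{1\}$ works. For $\dim G \geq 1$, I would ask whether $G$ itself is nearly simple; if so, we are done with $n=1$. Otherwise there is a closed subnormal subgroup $S \sub G$ which is neither open nor discrete, so $0 < \dim S < \dim G$.

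The key step is to upgrade such a ``bad'' subnormal subgroup to a genuine \emph{normal} one while keeping its dimension strictly between $0$ and $\dim G$. Here I would exploit the ascending chain condition on closed subgroups of a compact open subgroup $U$ of $G$ (used already in the proof of Lemma~\ref{lemrad}): among all closed subnormal subgroups $S$ with $0 < \dim S < \dim G$, choose one whose connected-component-of-the-identity Lie subalgebra, or rather whose open compact part, is minimal in a suitable sense; alternatively, replace $S$ by the intersection of all its $G$-conjugates, or by a topologically characteristic hull. A cleaner route: since $S$ is subnormal, there is a finite chain $S = S_k \lhd S_{k-1} \lhd \cdots \lhd S_0 = G$; take the largest $i$ with $\dim S_i = \dim S$ (this exists and is $\geq 1$ since $\dim S_0 = \dim G > \dim S$ while $\dim S_k = \dim S$), so $S_i \lhd S_{i-1}$ with $\dim S_{i-1} > \dim S_i > 0$. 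Now $S_{i-1}$ has strictly smaller dimension is not guaranteed — so instead I would pass to $N := $ the closure of the subgroup generated by all $G$-conjugates of $S_i$ inside $S_{i-1}$; but to get normality in $G$ I really want to work one step at a time. The honest approach: induct so that it suffices to produce a single closed \emph{normal} subgroup $N \lhd G$ with $0 < \dim N < \dim G$, or with $\dim N = \dim G$ but $N$ of strictly smaller ``complexity''. Given the bad subnormal $S$, let $N$ be the closure of the normal subgroup of $G$ generated by $S$; then $N \lhd G$ and $N \neq \{1\}$, so $\dim N \geq 1$; if also $\dim N < \dim G$ we apply the inductive hypothesis to $N$ and to $G/N$ (both of strictly smaller dimension — for $G/N$ this needs $\dim N \geq 1$) and concatenate the two series. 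If instead $\dim N = \dim G$, then $G/N$ is discrete hence nearly simple, and we recurse on $N$; the secondary induction parameter (e.g. the dimension of a chosen compact open subgroup modulo $S$, or simply that $N \subsetneq G$ as closed subgroups with the ACC forbidding an infinite strictly descending chain of full-dimensional closed normal subgroups) guarantees termination.

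The main obstacle I anticipate is precisely this bookkeeping in the full-dimensional case: ensuring the recursion terminates when every bad subnormal subgroup we find generates a normal subgroup of the same dimension as $G$. The resolution should be the ACC on closed subgroups of a fixed compact open subgroup $U$: a strictly descending chain $G \supsetneq N_1 \supsetneq N_2 \supsetneq \cdots$ of closed normal subgroups all of dimension $\dim G$ would give a strictly descending chain $U \cap N_1 \supsetneq U \cap N_2 \supsetneq \cdots$ of closed subgroups of $U$ (after possibly shrinking $U$), contradicting the ACC — so one can only recurse finitely often before reaching a nearly simple group. A secondary subtlety is that ``nearly simple'' must be checked for quotients $G_{j-1}/G_j$, not subgroups, so I would phrase the whole induction in terms of: \emph{every $p$-adic Lie group admits such a series}, and when I write $G/N$ with $N \lhd G$ closed and $\dim N \geq 1$, invoke the inductive hypothesis for the $p$-adic Lie group $G/N$ of dimension $\dim G - \dim N < \dim G$; lifting its series back through the quotient map $G \to G/N$ produces closed subgroups of $G$ between $G$ and $N$ with nearly simple successive quotients, and splicing with the series for $N$ completes the construction. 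Finally, when $G/N$ happens to be discrete I use directly that discrete groups are nearly simple, avoiding any appeal to the inductive hypothesis in the degenerate $\dim(G/N)=0$ case.
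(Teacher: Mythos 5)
Your overall strategy (induct on dimension, split off subfactors using a ``bad'' closed subnormal subgroup $S$ with $0 < \dim S < \dim G$) is the same as the paper's, but the version you finally settle on has a genuine gap in the termination argument, and the paper resolves it differently.

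You propose taking $N$ to be the closed normal closure of $S$ in $G$ and then splitting into the cases $\dim N < \dim G$ and $\dim N = \dim G$. In the second case you assert $N \subsetneq G$ and invoke an ``ACC forbidding an infinite strictly descending chain of full-dimensional closed normal subgroups.'' Both of these are problematic. First, there is no reason the normal closure of $S$ should be a proper subgroup of $G$; if $N = G$ the recursion makes no progress at all, and you have not addressed this. Second, even when $N \subsetneq G$ is an open normal subgroup, the relevant chain condition on a compact open subgroup $U$ of a $p$-adic Lie group is an \emph{ascending} chain condition on closed subgroups; there is no descending chain condition on open (equivalently, full-dimensional closed) subgroups --- consider $\Z_p \supsetneq p\Z_p \supsetneq p^2\Z_p \supsetneq\cdots$, which is a strictly descending chain of open \emph{normal} subgroups. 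So your proposed secondary parameter does not terminate the recursion.

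The paper sidesteps the full-dimensional case entirely by a maximality trick. Among all bad closed subnormal subgroups $S$ (neither open nor discrete, hence $0 < \dim S < \dim G$), pick one of \emph{maximal} dimension, and then a closed subnormal chain $G = G_0 \rhd G_1 \rhd \cdots \rhd G_k = S$ of minimal length. Each intermediate $G_i$ with $1 \leq i \leq k-1$ is a closed subnormal subgroup containing $S$, hence non-discrete; if it were not open it would be bad with $\dim G_i \geq \dim S$, so by maximality $\dim G_i = \dim S$, and one could replace $S$ by $G_i$ with a strictly shorter chain --- a contradiction. Thus $G_1,\ldots,G_{k-1}$ are open and the quotients $G_{j-1}/G_j$ for $j < k$ are discrete. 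Crucially, $G_{k-1}/G_k$ is nearly simple: any bad closed subnormal $T$ there would pull back to a bad closed subnormal $R = q^{-1}(T)$ of $G$ with $\dim S = \dim G_k < \dim R < \dim G$, again contradicting maximality. One then recurses on $G_k = S$, whose dimension is strictly smaller, so the induction on dimension genuinely terminates. You gesture at a version of this (``take the largest $i$ with $\dim S_i = \dim S$,'' though the indexing there is backwards --- you want the smallest such $i$), but you abandon it in favour of the normal-closure route, which is precisely where the gap appears.
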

The second proposition
provides information on nearly simple groups.
\begin{prop}\label{propo2}
Let $G$ be a $p$-adic Lie group,
with Lie algebra $\cg:=L(G)$.
If $G$ is nearly simple, then exactly one of the following cases
occurs:
\begin{itemize}
\item[\rm(a)]
$G$ is discrete;
\item[\rm(b)]
$\dim(G)=1$ and $G$ has an open subnormal subgroup~$W$
such that $W/D\cong \Z_p$
for some discrete normal subgroup $D\sub W$;
\item[\rm(c)]
$G$ has an open normal subgroup which is extraordinary;
\item[\rm(d)]
The Lie algebra $\cg$ is simple
and $\Ad_G\colon G\to \Aut(\cg)$ has discrete kernel.
In this case, $Q:=G/\ker\Ad_G$ is a nearly simple group
such that each closed, non-trivial subnormal subgroup of~$Q$ is open in~$Q$
and $\Ad_Q\colon Q\to\Aut(\cg)$ is injective.
\end{itemize}
\end{prop}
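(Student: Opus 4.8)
The plan is to run the whole analysis through the closed normal subgroup $K:=\ker\Ad_G$, whose Lie algebra is $L(K)=\ker(\ad)=Z(\cg)$ since $L(\Ad_G)=\ad$. As $G$ is nearly simple, $K$ is open or discrete; thus either $\dim G=0$, so that $G$ is discrete and we are in case~(a), or $\dim G\ge 1$ and exactly one of the following holds: $K$ is open, i.e.\ $Z(\cg)=\cg$ and $\cg$ is abelian; or $K$ is discrete, i.e.\ $Z(\cg)=\{0\}$. So I would treat the abelian case (aiming at~(b) or~(c)) and the case $Z(\cg)=\{0\}$ (aiming at~(d)) separately.

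\textbf{The abelian case.} Here $K$ is open and normal in $G$, hence itself nearly simple, and $K=\ker\Ad_K$ because $\Ad_K=\Ad_G|_K$ (as $K$ is open in $G$). If the commutator group $S'$ is non-discrete for \emph{every} open subnormal subgroup $S$ of $K$, then $K$ is extraordinary by definition, and $K$ is the open normal subgroup required for~(c). Otherwise, fix an open subnormal subgroup $S\subseteq K$ (hence of $G$) with $S'$ discrete. Then $S$ is nearly simple, $S'$ is closed, and $S/S'$ is an abelian $p$-adic Lie group of dimension $d:=\dim G$. If $d\ge 2$, a compact open subgroup of $S/S'$ contains a closed one-dimensional subgroup isomorphic to $\Z_p$; its preimage $H$ in $S$ is a closed normal (since $S/S'$ is abelian) subgroup of $S$ of dimension~$1$, which is neither open nor discrete, contradicting near simplicity of $S$. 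Hence $d=1$; taking for $W$ the preimage in $S$ of an open subgroup of $S/S'$ isomorphic to $\Z_p$ (which is normal in $S$, as $S/S'$ is abelian, hence open subnormal in $G$) and $D:=S'$, we are in case~(b).

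\textbf{The case $Z(\cg)=\{0\}$.} Now $K$ is discrete, $Q:=G/K$ is a $p$-adic Lie group with $L(Q)=\cg$, and $\Ad_Q$ is an injective continuous homomorphism into $\Aut(\cg)\le\GL(\cg)$, so $Q$ is linear; moreover $Q$ is nearly simple, since closed subnormal subgroups of $Q$ pull back to closed subnormal subgroups of $G$ containing~$K$. The key point is that $Q$ has no non-trivial discrete normal subgroup: if $D\lhd Q$ is discrete and $d\in D\setminus\{1\}$, then $D$ is closed (a discrete subgroup of a Hausdorff group is closed), the map $g\mapsto gdg^{-1}$ from $Q$ to the discrete space $D$ is locally constant, so $C_Q(d)$ is open; but $C_Q(d)=\Ad_Q^{-1}\!\bigl(C_{\Aut(\cg)}(\Ad_Q(d))\bigr)$ by injectivity of $\Ad_Q$, so its openness forces $\Ad_Q(d)$ to commute with $\ad(x)$ for all $x\in\cg$, whence $\Ad_Q(d)x-x\in Z(\cg)=\{0\}$ for all $x$, i.e.\ $\Ad_Q(d)=\id$ and $d=1$. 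Consequently, if the radical $R(Q)$ were non-trivial, the last non-trivial term of its derived series would be a non-trivial abelian subgroup whose closure is characteristic and closed in $Q$, hence open or discrete; it cannot be open (its closure is abelian, so $\cg$ would be abelian) and it cannot be discrete (it would be trivial by the previous step); hence $R(Q)=\{1\}$, and since $Q$ is linear, Lemma~\ref{radLie} shows $\cg$ is semisimple. If $\cg=\cs_1\oplus\cdots\oplus\cs_k$ with $k\ge 2$, then $\Ad_Q(Q)$ is open in $\Aut(\cg)$ (as $\ad\colon\cg\to\mathrm{Der}(\cg)$ is an isomorphism) and $\mathrm{Inn}(\cg)=\mathrm{Inn}(\cs_1)\times\cdots\times\mathrm{Inn}(\cs_k)$ is open and normal in $\Aut(\cg)$, so intersecting successively with $\mathrm{Inn}(\cg)$ and with $\mathrm{Inn}(\cs_1)\times\prod_{i\ge 2}\{1\}$ produces a closed subnormal subgroup of $Q$ with Lie algebra $\cs_1$, which is neither open nor discrete, again contradicting near simplicity; hence $\cg$ is simple. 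Finally, for the last assertion of~(d), let $T\ne\{1\}$ be a closed subnormal subgroup of $Q$ that is not open; then $T$ is discrete, and choosing such a $T$ of minimal subnormal length and passing to closures of the terms of a subnormal chain, $T$ becomes a discrete normal subgroup of an open subgroup $H$ of $Q$; since $L(H)=\cg$ is simple and $\Ad_Q|_H$ is injective, the argument above (applied inside $H$) forces $T=\{1\}$, a contradiction.

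\textbf{The main obstacle} is the case $Z(\cg)=\{0\}$, and specifically the proof that $\cg$ is simple. This rests on three points: the centraliser computation ruling out non-trivial discrete normal subgroups of $Q=G/\ker\Ad_G$, which genuinely uses both $Z(\cg)=\{0\}$ and injectivity of $\Ad_Q$; the linearity of $Q$, which makes Lemma~\ref{radLie} available to pass from "$R(Q)=\{1\}$" to "$\cg$ semisimple"; and the realisation of a simple ideal of a semisimple, non-simple $\cg$ by a genuine closed subnormal subgroup of $Q$ via the product decomposition of $\mathrm{Inn}(\cg)$. In the abelian case the only delicate step is the dimension collapse to $\dim G=1$, which I obtain by exhibiting an intermediate-dimensional closed normal subgroup once some open subnormal subgroup has discrete commutator group.
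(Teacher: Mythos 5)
Your argument is correct in substance, but it takes a noticeably different route from the paper's, and it is worth pointing out what each approach buys.

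The paper's proof first splits on whether $G$ has an open subnormal subgroup $S$ with $S'$ discrete; only in the complementary case (every such $S'$ non-discrete) does it bring in $\ker\Ad_G$, so that the extraordinary case and the semisimple case are handled under the standing hypothesis ``all open subnormal subgroups have non-discrete commutator.'' You instead split first on $\ker\Ad_G$ being open or discrete, via the (correct but unstated) identity $L(\ker\Ad_G)=\ker(\ad)=Z(\cg)$, so near simplicity forces $Z(\cg)\in\{0,\cg\}$. This is a clean observation the paper does not exploit. Since you do not have the ``non-discrete commutator'' hypothesis available in the $Z(\cg)=\{0\}$ branch, you replace the paper's induction on a subnormal chain (which uses injectivity of $\Ad$ on the quotient together with that hypothesis to force each term to be open) by a centraliser argument: a non-trivial discrete normal subgroup $D\lhd Q$ gives an open centraliser, and $Z(\cg)=\{0\}$ plus injectivity of $\Ad_Q$ then forces $D=\{1\}$. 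That is a genuinely different, and rather slick, mechanism. For semisimple $\Rightarrow$ simple, the paper intersects $\Ad_Q(Q)$ with the open normal subgroup $W=\Aut(\cs_1)\times\cdots\times\Aut(\cs_m)$ and uses the projections $\pi_k$ together with the exponential to produce a non-discrete proper closed normal subgroup; you intersect with $\mathrm{Inn}(\cs_1)\times\prod_{i\ge2}\{1\}$ instead. Both work, but the paper's version avoids appealing to the structure of $\mathrm{Inn}(\cs_j)$ (as the $\Q_p$-points of the adjoint group, and its normality and openness in $\Aut(\cs_j)$), which your version tacitly invokes; you could just as well use $\Aut(\cs_1)\times\prod_{i\ge2}\{1\}$ and sidestep the issue. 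Two small exposition gaps: when you speak of ``the last non-trivial term of the derived series of $R(Q)$'' you implicitly use that $R(Q)$ is soluble, which holds here because $Q$ is linear (Lemma~\ref{radLie}), but the justification should precede the use; and the identity $L(\ker\Ad_G)=Z(\cg)$, on which your case split rests, deserves a sentence of its own. Neither is a real defect; the proof is sound.
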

Further information on the groups $Q$ from part (d)
of Proposition~\ref{propo2}
is provided by the next proposition.
\begin{prop}\label{propo3}
Let $G$ be a $p$-adic Lie group
with simple Lie algebra $\cg:=L(G)$.
If $\Ad_G\colon
G\to\Aut(\cg)$ is injective,
then exactly one of the cases {\rm(a)} and {\rm(b)}
occurs:
\begin{itemize}
\item[{\rm(a)}]
$G$ is isomorphic to a compact open subgroup
in $\Aut(\cg)$;
\item[{\rm(b)}]
$G$ has an open, topologically characteristic, non-compact subgroup
$M$ which is topologically simple,
non-discrete, and compactly generated.
\end{itemize}
If, moreover, $G$ is elementary, then {\rm (a)} holds.
\end{prop}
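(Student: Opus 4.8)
The plan is to reduce to the case that $G$ is an open subgroup of $\Aut(\cg)$, and then to split off a compact case from a non-compact case using Lemma~\ref{genparreau}. First note that, $\cg$ being simple, the map $\ad\colon\cg\to\mathrm{Der}(\cg)=L(\Aut(\cg))$ is an isomorphism of Lie algebras (all derivations are inner, and $Z(\cg)=\{0\}$), and $L(\Ad_G)=\ad$ as recalled in Section~1; hence $\Ad_G$ is a local homeomorphism near~$1$, so an open map, and — being injective by hypothesis — an isomorphism of topological groups from $G$ onto an open (hence closed) subgroup of $\Aut(\cg)$. I may therefore assume, up to isomorphism of topological groups, that $G$ is an open subgroup of $\Aut(\cg)$. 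I regard $\Aut(\cg)$ as the group $\mathbf{H}(\Q_p)$ of $\Q_p$-points of the automorphism group scheme $\mathbf{H}\sub\GL(\cg)$ of~$\cg$; its identity component $\mathbf{H}^\circ$ is the adjoint algebraic group with Lie algebra~$\cg$, and $\mathbf{H}^\circ(\Q_p)$ has finite index in $\Aut(\cg)$.

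Now the tautological $G$-module $\cg$ is irreducible: a $G$-invariant subspace is invariant under $L(G)=\mathrm{Der}(\cg)=\ad(\cg)$, hence an ideal of~$\cg$, hence $\{0\}$ or~$\cg$. Thus hypothesis~(b) of Lemma~\ref{genparreau} holds for $G\sub\GL(\cg)$. If in addition every cyclic subgroup of~$G$ is relatively compact, then Lemma~\ref{genparreau} shows that $G$ is relatively compact, hence compact (it is closed in $\GL(\cg)$), so $G$ is a compact open subgroup of $\Aut(\cg)$ and~(a) holds. From now on assume that $G$ is not compact; I shall derive~(b), which is incompatible with~(a) since the subgroup $M$ in~(b) is non-compact. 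By the above, $G$ then contains an element~$g$ with $\langle g\rangle$ not relatively compact.

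Since $\langle g\rangle$ is not relatively compact, $g$ has eigenvalues of absolute value $\neq1$ on~$\cg$, and by the theory of contraction groups in reductive $p$-adic groups the contraction group $\mathrm{con}(g):=\{x\in\Aut(\cg):g^nxg^{-n}\to1\}$ is the group of $\Q_p$-points of the unipotent radical of a proper parabolic $\Q_p$-subgroup $\mathbf{P}$ of $\mathbf{H}^\circ$, and $\mathrm{con}(g^{-1})$ is that of the opposite parabolic $\mathbf{P}^-$. As $G$ is open and $g\in G$, both contraction groups are contained in~$G$: if $g^nxg^{-n}\to1$ then $g^nxg^{-n}\in G$ for large~$n$, so $x\in G$. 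Let $M:=\mathbf{H}^\circ(\Q_p)^+$ be the subgroup of $\mathbf{H}^\circ(\Q_p)$ generated by the $\Q_p$-points of the unipotent radicals of all parabolic $\Q_p$-subgroups; by the structure theory of isotropic semisimple groups $M$ is already generated by the single opposite pair just produced, whence $M\sub G$. Here $M$ is open in $\Aut(\cg)$, hence in~$G$; it has finite index in $\mathbf{H}^\circ(\Q_p)$, hence is non-compact (as $\mathbf{H}^\circ$ is isotropic in this case), non-discrete and compactly generated; it is topologically simple (indeed abstractly simple by Tits' simplicity theorem, $\mathbf{H}^\circ$ being adjoint); and it is topologically characteristic in~$G$, being the monolith of~$G$ — one checks $C_{\Aut(\cg)}(M)=\{1\}$ (since $M$ is Zariski dense in the adjoint group $\mathbf{H}^\circ$), which forces every non-trivial closed normal subgroup of~$G$ to contain~$M$. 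This gives~(b). Finally, if $G$ were elementary, then the open — in particular closed — subgroup~$M$ would be elementary by~\ref{perm}; but a non-discrete, compactly generated, topologically simple, totally disconnected locally compact group is not elementary~\cite{We2}. Hence~(b) cannot occur, so~(a) holds.

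The hard part will be the structural input on the isotropic adjoint group $\mathbf{H}^\circ$ that feeds the third paragraph: that the contraction group of a non-relatively-compact element of $\mathbf{H}^\circ(\Q_p)$ is the group of $\Q_p$-points of the unipotent radical of a proper parabolic $\Q_p$-subgroup; that $\mathbf{H}^\circ(\Q_p)^+$ is generated by a single opposite pair of such unipotent radicals; and that $\mathbf{H}^\circ(\Q_p)^+$ has finite index in $\mathbf{H}^\circ(\Q_p)$ (so is compactly generated, and non-compact in the isotropic case). These facts have to be extracted, or adapted, from the literature on reductive groups over local fields and on tidy subgroups and contraction groups; the remaining steps — the reduction to an open subgroup of $\Aut(\cg)$, the irreducibility of $\cg$, the application of Lemma~\ref{genparreau}, and the verification of the properties of $M$ — are routine.
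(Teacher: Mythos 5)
Your proposal is correct in outline but takes a genuinely different route from the paper.

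Both proofs begin the same way (observe $\Ad_G$ is \'etale, hence an isomorphism onto an open subgroup of $\Aut(\cg)$) and both invoke Lemma~\ref{genparreau} in the compact case; after that, the strategies diverge. The paper's dichotomy is built on the closure $M=\wb{G^\dag}$ of the Tits core, which by Proposition~\ref{propo2}(d) is either $\{1\}$ or open. In the first case, triviality of all contraction groups gives $s_G\equiv 1$ by \cite{BaW}, hence (by \cite{FOR}) unit eigenvalues of $\Ad_G(g)$, hence relative compactness of each $\langle\Ad_G(g)\rangle$, and then Lemma~\ref{genparreau} yields compactness. In the second case the paper proves topological simplicity of $M$ by an intrinsic argument ($U_g\sub N$ for any non-trivial closed normal $N$, and $M=\wb{M^\dag}$) and appeals to \cite[Prop.\ 6.5]{Clu} only for compact generation. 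Your dichotomy is the more naive-looking ``every cyclic subgroup relatively compact / some $g$ with $\langle g\rangle$ unbounded'', which gets to Lemma~\ref{genparreau} more directly, and in the second case you identify $M$ explicitly as $\mathbf{H}^\circ(\Q_p)^+$ and import topological (indeed abstract) simplicity, compact generation and non-compactness from the Borel--Tits/Kneser--Tits/Tits-simplicity package. Your argument that $M$ is topologically characteristic (via $C_{\Aut(\cg)}(M)=\{1\}$ and the monolith property) is also different from the paper's (which gets it for free because $G^\dag$ is defined invariantly). What you buy is a more concrete description of the simple open subgroup and a shorter compact case; what it costs is considerably heavier input from reductive-group structure theory, which you correctly flag as the hard part. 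Two of those flagged points deserve explicit care if this were to be written out: (i) to invoke the parabolic description of $\mathrm{con}(g)$ you should first replace $g$ by a power lying in $\mathbf{H}^\circ(\Q_p)$ (which is fine, as $\mathbf{H}^\circ(\Q_p)$ has finite index and $\mathrm{con}(g)=\mathrm{con}(g^k)$), and one must argue that $\mathrm{con}(g)$ really is a full unipotent radical rather than a proper subgroup of one when $g$ has a non-trivial elliptic/unipotent part; (ii) the generation of $\mathbf{H}^\circ(\Q_p)^+$ by the $\Q_p$-points of a \emph{single} opposite pair of (possibly non-minimal) parabolic unipotent radicals needs a reference or a proof (it does hold, via the Chevalley commutator relations and $\Q_p$-simplicity, but it is not the form in which the Borel--Tits generation result is usually stated). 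Neither of these is a gap in the sense of a wrong idea — both are standard but nontrivial facts — and your conclusion, including the final step using \cite[Prop.\ 6.2]{We2} for the elementary case, matches the paper exactly.
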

The next proposition provides vital information
on extraordinary groups.
\begin{prop}\label{propo4}
Let $G$ be an extraordinary $p$-adic Lie group.
Then $G=\bigcup_{n\in\N}G_n$
with open subgroups $G_1\sub G_2\sub\cdots$
such that each $G_n$ has an open centre.
Hence, if $G$ is $\sigma$-compact,
then $G$ is elementary
with $\erk(G)\leq 2$. Moreover, $G$ is not compactly generated.
\end{prop}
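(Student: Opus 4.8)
The plan is to produce the ascending exhaustion $G=\bigcup_{n}G_n$ from centralizers of a neighbourhood basis of~$1$, and then to read off the bound on the construction rank directly from the recursive definition of the classes~$\Sigma_\lambda$. Since $G$ is extraordinary we have $G=\ker\Ad_G$, so $\cg:=L(G)$ is abelian; hence the Baker--Campbell--Hausdorff operation reduces to addition on a small lattice, and the exponential map identifies a compact open subgroup of $(\cg,+)$ with a compact open subgroup of~$G$. In particular $G$ possesses \emph{abelian} compact open subgroups, and we fix a neighbourhood basis $U_1\supseteq U_2\supseteq\cdots$ of~$1$ in~$G$ consisting of such.

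I first claim that $C_G(g)$ is open for every $g\in G$. Conjugation $I_g$ is an automorphism of the $p$-adic Lie group~$G$ with $L(I_g)=\Ad_G(g)=\id_\cg$, so by naturality of the exponential map ($I_g\circ\exp=\exp\circ L(I_g)$, see~\cite{Bou}) we get $I_g\circ\exp=\exp$ on a neighbourhood of~$0$ in~$\cg$; hence $g$ centralises $\exp$ of that neighbourhood, which generates an open subgroup, so $C_G(g)$ is open. (Equivalently, $L(C_G(g))=\{X\in\cg:\Ad_G(g)X=X\}=\cg$, so $C_G(g)$ has full dimension.) Put $G_n:=C_G(U_n)$. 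Since $U_1$ is abelian and $U_n\subseteq U_1$, we have $U_1\subseteq C_G(U_n)=G_n$, so each $G_n$ is open; since the $U_n$ decrease, the $G_n$ ascend; and $U_n\subseteq Z(C_G(U_n))=Z(G_n)$ shows that $Z(G_n)$ is open in~$G_n$. Finally, for any $g\in G$ the open set $C_G(g)$ contains some~$U_n$, whence $g\in C_G(U_n)=G_n$; thus $G=\bigcup_{n\in\N}G_n$, which proves the first assertion.

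Now suppose $G$ is $\sigma$-compact. Then $G$ is second countable (it is a countable union of cosets of a compact open subgroup, which is metrizable profinite), and every open subgroup $G_n$ is again $\sigma$-compact and second countable. Fix a compact open subgroup $K\subseteq G$ and set $V_n:=K\cap Z(G_n)$. Then $V_n$ is compact and open in~$G$, hence compact open in~$G_n$; it lies in the centre of~$G_n$ and is therefore a closed normal subgroup of~$G_n$; it is a metrizable profinite group, so $V_n\in\Sigma_0$; and $G_n/V_n$ is discrete (as $V_n$ is open in~$G_n$) and $\sigma$-compact, hence countable, so $G_n/V_n\in\Sigma_0$. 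By clause~(a) in the definition of~$\Sigma_1$ this gives $G_n\in\Sigma_1$, i.e.\ $\erk(G_n)\leq1$. Since $G=\bigcup_{n\in\N}G_n$ is an ascending union of open subgroups lying in~$\Sigma_1$, clause~(b) in the definition of~$\Sigma_2$ yields $G\in\Sigma_2$, so $G$ is elementary with $\erk(G)\leq2$.

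The main obstacle is the openness of $C_G(g)$ for each~$g$, which is precisely where the hypothesis $G=\ker\Ad_G$ is essential; one has to note that the identity $I_g\circ\exp=\exp\circ\Ad_G(g)$ is only asserted on a neighbourhood of~$0$ that may shrink with~$g$, so the construction genuinely yields an ascending union and not a single open central subgroup --- such a subgroup would be an open normal abelian subgroup of~$G$, hence an open subnormal subgroup with trivial (in particular discrete) commutator group, contradicting that $G$ is extraordinary. Everything else is routine bookkeeping with the recursive definition of~$\Sigma_\lambda$.
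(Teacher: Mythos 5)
Your proof is correct and follows the paper's basic plan: exhaust $G$ by centralizers $G_n=C_G(U_n)$ of a shrinking neighbourhood basis at~$1$, observe $U_n\subseteq Z(G_n)$ to get open centres, and then read off $\erk(G_n)\leq 1$ and $\erk(G)\leq 2$ from the recursive definition of the classes $\Sigma_\lambda$. The one genuine difference lies in how openness of the $G_n$ is established. The paper starts with an arbitrary nested sequence of compact open subgroups $V_n$, notes $G=\bigcup_n C_G(V_n)$ (each $g$ centralizes an open subgroup because $\Ad_G(g)=\id$), and then invokes Baire's category theorem to conclude that $C_G(V_N)$ is open for some~$N$, passing to a subsequence thereafter. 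You instead exploit the fact that $L(G)$ is abelian more fully: via BCH and the exponential map you produce \emph{abelian} compact open subgroups $U_1\supseteq U_2\supseteq\cdots$, and then $U_1\subseteq C_G(U_n)=G_n$ gives openness of every $G_n$ immediately, with no Baire argument needed. Your route is slightly more self-contained and constructive; the paper's route is more robust in that it does not require the neighbourhood basis to consist of abelian subgroups. Your closing observation --- that an extraordinary group cannot have an open central subgroup, so the ascending union is irredundant --- is a correct and useful sanity check, though not needed for the proof.
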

\begin{proof}
Since $\Ad_G(g)=\id_{L(G)}$, each $g\in G$
centralizes an open subgroup of~$G$
(cf.\ Proposition~8 in \cite[Chapter~III, \S4, no.\,4]{Bou}).
Let $V_1\supseteq V_2\supseteq\cdots$
be compact open subgroups
which form a base of identity neighbourhoods in~$G$.
Let $C_G(V_n)$ be the centralizer of~$V_n$ in~$G$.
Then $C_G(V_1)\subseteq C_G(V_2)\subseteq\cdots$ and
\begin{equation}\label{isunion}
G=\bigcup_{n\in \N}C_G(V_n)
\end{equation}
by the preceding.
By Baire's category theorem,
$C_G(V_N)$ is open in~$G$ for some~$N$.
After passage to a subsequence,
we may therefore assume that $C_G(V_n)$
is open in~$G$ for each $n\in \N$.
Note that $W_n:=V_n\cap C_G(V_n)$ is a compact open subgroup
of $C_G(V_n)$ which is contained in the centre $Z(C_G(V_n))$.
Hence $C_G(V_n)$ has open centre.
If~$G$ is $\sigma$-compact,
then $C_G(V_n)$ is elementary with
construction rank $\leq 1$,
since the compact open subgroup~$W_n$ is central (and hence normal)
in $C_G(V_n)$.
Hence $\erk(G)\leq 2$, by (\ref{isunion}).\\[2.3mm]
If $G$ was generated by a compact subset~$K$,
then we would have $K\sub C_G(V_n)$ for some $n\in\N$ and thus $G=C_G(V_n)$,
contradicting the fact that an extraordinary group~$G$ cannot have an open centre.
\end{proof}
Several lemmas
and observations
will help us to prove Propositions \ref{propo1}--\ref{propo3}.
\begin{la}\label{furtherla}
If a topological group $G$
is nearly simple, then also each closed subnormal
subgroup $S\sub G$
is nearly simple.
\end{la}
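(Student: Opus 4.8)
The plan is to unwind the definitions and exploit transitivity of subnormality together with transitivity of the subspace topology. Let $S\sub G$ be a closed subnormal subgroup, and let $T\sub S$ be a closed subnormal subgroup of~$S$; the goal is to show that $T$ is open in~$S$ or discrete.

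First I would observe that $T$ is subnormal in~$G$: concatenating a subnormal series running from~$T$ up to~$S$ with one running from~$S$ up to~$G$ produces a subnormal series from~$T$ to~$G$. Next, since $T$ is closed in~$S$ and $S$ is closed in~$G$, the subgroup~$T$ is closed in~$G$. Hence $T$ is a closed subnormal subgroup of~$G$, and near-simplicity of~$G$ yields that $T$ is open in~$G$ or discrete.

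It then remains to transfer this conclusion back to~$S$. If $T$ is open in~$G$, then $T=T\cap S$ is open in~$S$. If $T$ is discrete as a subgroup of~$G$, then, because the subspace topology on~$T$ inherited from~$S$ agrees with the one inherited from~$G$ (transitivity of subspace topologies along $T\sub S\sub G$), the group~$T$ is discrete as a subgroup of~$S$ as well. In either case $T$ is open in~$S$ or discrete, so~$S$ is nearly simple.

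There is no genuinely hard step here; the only point requiring a little care is the bookkeeping of which ambient group the words \emph{open}, \emph{closed} and \emph{discrete} refer to, together with the (harmless) compatibility of these notions along the chain $T\sub S\sub G$.
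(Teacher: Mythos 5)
Your proof is correct and follows essentially the same approach as the paper's: you show that a closed subnormal subgroup $T$ of $S$ is a closed subnormal subgroup of $G$ (by transitivity of subnormality and closedness), then apply near-simplicity of $G$ and transfer the conclusion back to $S$. The paper's proof is more terse but uses exactly this argument.
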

\begin{proof}
If $T$ is a closed subnormal
subgroup of~$S$, then $T$ also is a closed subnormal
subgroup of~$G$
and thus open in~$G$ (hence also open in~$S$)
or discrete.
\end{proof}
\begin{la}\label{openenough}
If $V$ is an open subgroup of a topological
group~$G$ and $V$ is nearly simple, then also
$G$ is nearly simple.
\end{la}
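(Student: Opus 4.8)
The plan is to pass from $G$ to the open subgroup $V$ by intersecting with $V$. So let $S\sub G$ be a closed subnormal subgroup and fix a subnormal series $S=S_0\lhd S_1\lhd\cdots\lhd S_k=G$. I would first note that intersecting this series with $V$ yields $S\cap V=S_0\cap V\lhd S_1\cap V\lhd\cdots\lhd S_k\cap V=V$, each $S_{i-1}\cap V$ being normal in $S_i\cap V$ simply because $S_{i-1}\lhd S_i$; thus $S\cap V$ is subnormal in~$V$. It is moreover closed in~$V$, since $S$ is closed in~$G$. Hence near simplicity of~$V$ applies to $S\cap V$, and tells us that $S\cap V$ is open in~$V$ or discrete.

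Next I would exploit that $V$ is open in~$G$, so that $S\cap V$ is open in~$S$. In the first case, where $S\cap V$ is open in~$V$, it is then also open in~$G$; being a union of cosets of the open subgroup $S\cap V$, the group~$S$ is therefore open in~$G$. In the second case, where $S\cap V$ is discrete, the singleton $\{1\}$ is open in $S\cap V$, hence open in~$S$ (as $S\cap V$ is open in~$S$), so $S$ is discrete. Either way $S$ is open in~$G$ or discrete, which is exactly what is required for $G$ to be nearly simple.

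I do not anticipate a real obstacle here; the only point deserving a moment's care is the verification that subnormality of~$S$ in~$G$ genuinely descends to subnormality of $S\cap V$ in~$V$, but that is the routine fact that intersecting a subnormal series with a fixed subgroup again yields a subnormal series.
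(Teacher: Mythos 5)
Your proof is correct and takes essentially the same route as the paper: intersect $S$ with the open subgroup $V$, observe that $S\cap V$ is a closed subnormal subgroup of~$V$, and read off openness or discreteness of~$S$ from the corresponding property of $S\cap V$. You merely spell out the subnormality and openness verifications in more detail than the paper does.
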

\begin{proof}
If $S\sub G$ is a closed subnormal subgroup of~$G$,
then $S\cap V$ is a closed subnormal subgroup of~$V$.
Since $V$ is nearly simple, $S\cap V$ is open in~$V$ or discrete.
If $S\cap V$ is open in~$V$, then $S$ is open in~$G$.
If $S\cap V$ is discrete, then $S$ is discrete.
\end{proof}
\begin{la}\label{nearlyquot}
Let $G$ be a topological group and $D\sub G$ be a discrete
normal subgroup. If $G$ is nearly simple, then also $G/D$ is nearly simple.
\end{la}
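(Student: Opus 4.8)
The plan is to pull the closed subnormal subgroup back along the quotient map and exploit near-simplicity of~$G$ there. Write $\pi\colon G\to G/D$ for the canonical projection, which is a continuous, open, surjective homomorphism. Let $\wb S\sub G/D$ be an arbitrary closed subnormal subgroup; we must show that $\wb S$ is open in $G/D$ or discrete. Set $S:=\pi^{-1}(\wb S)$. Since $\pi$ is continuous, $S$ is a closed subgroup of~$G$, and $D\sub S$. If $\wb S=\wb S_0\lhd \wb S_1\lhd\cdots\lhd \wb S_k=G/D$ is a subnormal series, then applying $\pi^{-1}$ termwise gives a subnormal series $S=\pi^{-1}(\wb S_0)\lhd\pi^{-1}(\wb S_1)\lhd\cdots\lhd\pi^{-1}(\wb S_k)=G$ of closed subgroups, so $S$ is a closed subnormal subgroup of~$G$.

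Because $G$ is nearly simple, either $S$ is open in~$G$ or $S$ is discrete. In the first case, $\wb S=\pi(S)$ is open in $G/D$, since $\pi$ is an open map. In the second case, I claim $\wb S=\pi(S)$ is discrete in $G/D$. Here the hypothesis $D\sub S$ does the work: choose an open subset $U\sub G$ with $U\cap S=\{1\}$. Then $\pi^{-1}(\pi(U))=UD$, and since $D\sub S$ one checks $UD\cap S=D$ (if $ud\in S$ with $u\in U$, $d\in D$, then $u=(ud)d^{-1}\in S$, forcing $u=1$). Consequently the open set $\pi(U)\sub G/D$ meets $\pi(S)$ only in $\pi(D)=\{\wb 1\}$, which shows that $\{\wb 1\}$ is open in the subspace $\pi(S)$; as $\pi(S)$ is a subgroup, it is discrete.

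I do not anticipate a serious obstacle here; the only point requiring care is the discreteness argument in the second case, where one must verify that discreteness of~$S$ inside~$G$ transfers to discreteness of~$\pi(S)$ inside $G/D$, and this is exactly where the assumption $D\sub S$ (equivalently, $S=\pi^{-1}(\wb S)$) is essential — without it one would only control $S$ modulo~$D$, not~$S$ itself.
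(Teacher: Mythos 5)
Your proof is correct and follows essentially the same route as the paper: pull the closed subnormal subgroup back along the quotient map, apply near-simplicity of $G$ to the (closed, subnormal, saturated) preimage, and push openness respectively discreteness back down. The only difference is that where the paper simply notes $S\cong q^{-1}(S)/D$ is discrete as a quotient of a discrete group, you verify the discreteness transfer by hand via the identity $UD\cap S=D$, which is a fine, slightly more explicit way of making the same point.
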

\begin{proof}
Let $q\colon G\to G/D$ be the canonical quotient map.
If~$S$ is a closed subnormal subgroup of~$G/D$,
then $q^{-1}(S)$ is a closed subnormal subgroup
of~$G$ and hence open (in which case also $S=q(q^{-1}(S))$
is open) or discrete, in which case also $S\cong q^{-1}(S)/D$
is discrete.
\end{proof}
\begin{la}\label{snclosed}
Let $G$ be a topological group and $S\sub G$ be a closed subnormal subgroup.
Then there exists a series $G=G_0\rhd G_1\rhd\cdots\rhd G_k=S$
such that $G_j$ is closed in~$G$ for all
$j\in \{1,\ldots, k\}$.
\end{la}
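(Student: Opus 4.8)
The statement is that a closed subnormal subgroup $S$ of a topological group $G$ sits inside a \emph{closed} subnormal series, i.e.\ one in which every term is closed in $G$ (not merely closed in the previous term). The plan is to induct on the length of a given subnormal series witnessing that $S$ is subnormal in $G$. So fix a series $G = H_0 \rhd H_1 \rhd \cdots \rhd H_m = S$ of abstract subgroups, each normal in the previous one; I do not assume the $H_i$ are closed. If $m \le 1$ there is nothing to do (for $m=1$, $S$ is already normal and closed in $G$).

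For the inductive step, I would isolate the single-step case first as the heart of the argument: if $S \lhd H$ with $S$ closed in $G$ and $H$ a (possibly non-closed) subgroup of $G$ with $H \lhd G$... but wait, that is not quite the configuration either, since $H_1$ need not be normal in $G=H_0$. The cleaner move is: apply the inductive hypothesis inside $H_1$. Namely, $S$ is a closed subnormal subgroup of $H_1$ via the shorter series $H_1 \rhd H_2 \rhd \cdots \rhd H_m = S$ of length $m-1$; but to invoke the inductive hypothesis I need $S$ closed \emph{in $H_1$}, which holds because $S$ is closed in $G$ and $S \subseteq H_1$. That gives a chain $H_1 = K_0 \rhd K_1 \rhd \cdots \rhd K_r = S$ with each $K_j$ closed in $H_1$. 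The obstacle now surfaces: "closed in $H_1$" is not "closed in $G$" unless $H_1$ itself is closed in $G$. So the real task reduces to the case $m=1$, handled directly, plus passing from $H_1$ to its closure.

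Here is the mechanism I expect to use for the one-step reduction. Suppose $S \lhd H \le G$ with $S$ closed in $G$. Let $\overline{H}$ be the closure of $H$ in $G$; then $\overline{H}$ is a closed subgroup of $G$, and I claim $S \lhd \overline{H}$. Indeed, for fixed $s \in S$ the map $g \mapsto g s g^{-1}$ is continuous $G \to G$ and sends $H$ into $S$; since $S$ is closed, it sends $\overline{H}$ into $S$ as well. Hence $S$ is normal in the closed subgroup $\overline{H}$, i.e.\ $\overline{H} \rhd S$ is a two-term series of subgroups closed in $G$. Iterating: starting from the abstract series $G = H_0 \rhd H_1 \rhd \cdots \rhd H_m = S$, replace it by $G = \overline{H_0} \rhd \overline{H_1} \rhd \cdots \rhd \overline{H_m}$? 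That does not obviously work because $S \subseteq \overline{H_m}$ could be proper and normality $\overline{H_{i+1}} \lhd \overline{H_i}$ needs the same closure trick applied with $S$ replaced by $\overline{H_{i+1}}$ — which requires $\overline{H_{i+1}}$ to be normal in $H_i$, only known to be normal in $H_i$ if $H_{i+1} \lhd H_i$ implies $\overline{H_{i+1}} \lhd H_i$. But that IS a valid instance of the same continuity argument: $\overline{H_{i+1}}$ is the closure of a normal subgroup $H_{i+1}$ of $H_i$, and for each $h \in H_i$ conjugation by $h$ is continuous and preserves $H_{i+1}$, hence preserves $\overline{H_{i+1}}$. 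So $\overline{H_{i+1}} \lhd H_i$, and then applying the closure argument once more, $\overline{H_{i+1}} \lhd \overline{H_i}$. Finally splice $S$ onto the end: $S$ is closed in $G$ and normal in $H_m \subseteq \overline{H_m}$, so by the same trick $S \lhd \overline{H_m}$, and we obtain the closed subnormal series $G = \overline{H_0} \rhd \overline{H_1} \rhd \cdots \rhd \overline{H_m} \rhd S$, with all terms closed in $G$.

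The one genuinely delicate point — and the step I would flag as the main obstacle — is keeping the two notions of normality straight: I repeatedly need "a closed subgroup of $G$ that is normal in a dense subgroup is normal in the whole (closed) group," which is exactly the continuity-of-conjugation observation used above and is the reason the argument works for arbitrary topological groups (no Hausdorff or local-compactness hypothesis is needed beyond what makes "closure" behave, i.e.\ none). Writing it up, I would state this closure fact as a one-line sub-observation and then assemble the series. Everything else is bookkeeping on the index $i$.
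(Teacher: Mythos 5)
Your proposal is correct and takes the same approach as the paper, which simply says "after replacing each $G_j$ with its closure, we may assume without loss of generality that all of $G_0,\ldots,G_k$ are closed." You fill in the continuity-of-conjugation details that the paper leaves implicit; the only cosmetic slip is that your final term $\overline{H_m}$ already equals $S$ (since $H_m=S$ is closed), so the appended $\overline{H_m}\rhd S$ is redundant.
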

\begin{proof}
If $S\sub G$ is a closed subnormal subgroup,
we find a series $G=G_0\rhd G_1\rhd\cdots\rhd G_k=S$.
After replacing each $G_j$ with its closure,
we may assume without loss of generality that
all of $G_0,\ldots, G_k$ are closed.
\end{proof}
\begin{la}\label{clearexa}
If a $p$-adic Lie group $G$ has dimension $\dim(G)\leq 1$
or its Lie algebra $\cg:=L(G)$ is simple,
then $G$ is nearly simple.
\end{la}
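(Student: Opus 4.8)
The plan is to verify that any closed subnormal subgroup $S\sub G$ is open or discrete, by reducing along the subnormal chain to the case of a closed \emph{normal} subgroup and then using the Lie algebra. First I would note that it suffices to treat closed \emph{normal} subgroups: if $S$ is closed subnormal, then by Lemma \ref{snclosed} there is a series $G=G_0\rhd G_1\rhd\cdots\rhd G_k=S$ with all $G_j$ closed; if every closed normal subgroup of a $p$-adic Lie group satisfying the hypothesis is open or discrete, then by downward induction each $G_j$ is open or discrete in $G_{j-1}$ (note that each $G_j$ inherits the hypothesis: a closed subgroup of $G$ has dimension $\le\dim(G)$, and an open subgroup of $G$ has Lie algebra $L(G)$, so the ``$\dim\le 1$'' case is immediate, while in the simple-Lie-algebra case one first checks that $G_{j-1}$ is open or discrete before passing to $G_j$). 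Composing ``open'' and ``discrete'' relations, $S$ is itself open or discrete in $G$. So the core is: a closed normal subgroup $N\lhd G$ is open or discrete.

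For a closed normal subgroup $N\lhd G$ of a $p$-adic Lie group, $N$ is itself a $p$-adic Lie group and $L(N)$ is an ideal of $\cg=L(G)$ which is invariant under $\Ad_G(g)$ for all $g\in G$; moreover $N$ is open in $G$ iff $L(N)=\cg$ (equivalently $\dim N=\dim G$), and $N$ is discrete iff $L(N)=\{0\}$. So I need to show $L(N)$ is either all of $\cg$ or zero. If $\dim(G)\le 1$ this is trivial, since the only subspaces of $\cg$ are $\{0\}$ and $\cg$. If $\cg$ is simple, then $\cg$ in particular has no ideals other than $\{0\}$ and $\cg$; since $L(N)$ is an ideal of $\cg$, it must be $\{0\}$ or $\cg$. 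Either way $N$ is discrete or open, as required.

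The only point that needs a little care — and the main (mild) obstacle — is the bookkeeping in the subnormal reduction: when I walk down the series from Lemma \ref{snclosed}, I must know that the hypothesis propagates to each $G_{j-1}$ before I can apply the normal-subgroup case to $G_j\lhd G_{j-1}$. In the $\dim(G)\le 1$ case this is automatic since closed subgroups have dimension $\le 1$. In the simple-Lie-algebra case, $G_{j-1}$ is a closed subgroup of $G$; by induction from above, $G_{j-1}$ is open or discrete in $G$. If it is discrete, then $G_j$ is discrete in $G_{j-1}$ (hence in $G$) and we are done for that step; if it is open, then $L(G_{j-1})=\cg$ is simple, so the hypothesis holds for $G_{j-1}$ and the normal-subgroup argument applies to conclude $G_j$ is open or discrete in $G_{j-1}$. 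This alternation exactly matches the two cases already isolated, so no new ideas are needed; it is purely a matter of organizing the induction cleanly.
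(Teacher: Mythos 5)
Your proposal is correct and follows essentially the same route as the paper's proof: reduce to the normal case via Lemma~\ref{snclosed}, use that $L(N)$ is an ideal of $\cg$ to force $L(N)\in\{\{0\},\cg\}$, and propagate ``open $\Rightarrow$ Lie algebra still $\cg$'' down the subnormal chain. The paper streamlines the bookkeeping by assuming $S$ non-discrete at the outset (so each $G_j$ is automatically non-discrete, hence open, at every step), but this is only a cosmetic difference from your case split.
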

\begin{proof}
If $\dim(G)=0$, then $G$ (and hence every subgroup
of $G$) is discrete.

If $\dim(G)=1$, then every closed subgroup of
$G$ has dimension~$1$ (in which case it is open)
or dimension~$0$ (in which case it is discrete).

If the Lie algebra $\cg$ is simple
and $N\sub G$ a non-discrete closed
normal subgroup,
then $L(N)$ is a non-zero ideal in $\cg$
and thus $L(N)=\cg$, entailing that $N$ is open in~$G$.
If $S\sub G$ is a non-discrete closed subnormal subgroup,
we find a series $G=G_0\rhd G_1\rhd\cdots\rhd G_k=S$
with closed subgroups $G_0,\ldots, G_k$,
by Lemma~\ref{snclosed}.
By the preceding, $G_1$ is open in~$G$ and thus $L(G_1)=\cg$
is simple. Repeating the argument, we see
that $G_j$ is open in $G_{j-1}$
for all $j\in \{1,\ldots, k\}$. Notably,
$S=G_k$ is open in~$G$. Thus $G$ is nearly simple.
\end{proof}
{\bf Proof of Proposition~\ref{propo1}.}
If $G$ is nearly simple (e.g., if $\dim(G)\leq 1$),
we can take $G_0:=G$ and $G_1:=\{1\}$.

\emph{Induction step}. Assume that $\dim(G)\geq 2$
and $G$ is not nearly simple. Then there exists
a closed subnormal
subgroup $S$ of~$G$ which is neither open in~$G$ nor discrete.
Thus $0<\dim(S)<\dim(G)$.
Since $S$ is subnormal, we find a series
$G=G_0\rhd G_1\rhd\cdots \rhd G_k=S$
of closed subgroups (see Lemma~\ref{snclosed}).
We may assume that $S$ has been chosen such that~$\dim(S)$
is maximal. Hence $G_1,\ldots, G_{k-1}$ are open
in~$G$ and thus $G_{j-1}/G_j$ is discrete
(and hence nearly simple) for all $j\in \{1,\ldots, k-1\}$.
Also $Q:=G_{k-1}/G_k$ is nearly simple.
To see this, let $q\colon G_{k-1}\to Q$ be the quotient map.
If the assertion was false, we could find a closed subnormal subgroup
$T\sub Q$ such that $0<\dim(T)<\dim(Q)$.
Thus $R:=q^{-1}(T)$ would be a closed subnormal
subgroup of $G$ with $\dim(G_k)<\dim(R)<\dim(G)$,
contradicting the maximality of~$\dim(S)$.
Since $\dim(G_k)<\dim(G)$, by induction we
have a series $G_k\rhd G_{k+1}\rhd\cdots\rhd G_n=\{1\}$
of closed subgroups with nearly simple quotients.\Punkt\\[2mm]
{\bf Proof of Proposition~\ref{propo2}.}
If~$G$ is discrete, we have~(a).
Now assume $\dim(G)\geq 1$.

\emph{Case} 1: \emph{$G$
has an open subnormal subgroup~$S$
with discrete commutator group~$S'$}.
Then $Q:=S/S'$ is an abelian
Lie group with $L(Q)=\cg$.
By Lemmas~\ref{furtherla} and~\ref{nearlyquot}, $Q$ is nearly simple.
Let $\ca\sub\cg$ be a $1$-dimensional vector subspace
and $\gamma\colon \Z_p\to Q$ be a continuous
homomorphism with $0\not=\gamma'(0)\in\ca$.
Then $A:=\gamma(\Z_p)$ is a compact Lie subgroup
of~$Q$ with $L(A)=\ca$
and $\gamma\colon \Z_p\to A$ is an isomorphism.
Since~$A$ is normal and non-discrete,
$A$ is open~in~$Q$ and so $\cg=\ca$
is $1$-dimensional.
Let $q\colon S\to Q$
be the quotient map. Then $W:=q^{-1}(A)$
is open and subnormal in~$G$
and $W/S'\cong\Z_p$, i.e.,\,(b) holds.

\emph{Case} 2: \emph{Each open subnormal subgroup of~$G$ has
non-discrete commutator group.}
We then consider $\Ad_G\colon G\to \Aut(\cg)$
and note that $\ker\Ad_G$ is a closed normal subgroup
of~$G$. Hence either $\ker\Ad_G$ is open
or $\ker\Ad_G$ is discrete.
If $S:=\ker\Ad_G$ is open,
then $\Ad_S(g)=\Ad_G(g)=\id_\cg$ for all~$g\in S$
and thus~$S$ is a nearly simple, extraordinary,
open normal subgroup of~$G$ (whence we have~(c)).
Now assume that $\ker\Ad_G$
is discrete.
Then $Q:=G/\ker\Ad_G$ is nearly simple
by Lemma~\ref{nearlyquot}.
Moreover, $\Rad(Q)$ is discrete.
To see this, suppose that the closed normal
subgroup~$\Rad(Q)$ of~$Q$ was non-discrete
(and hence open). Then~$Q$ has a soluble, open, normal
subgroup~$N$ (Lemma~\ref{lemrad}). Choose $k\in \N_0$ maximal
such that $N^{(k)}$ is non-discrete. Let $q\colon G\to Q$ be the quotient map.
Then $H:=q^{-1}(\overline{N^{(k)}})$ is a non-discrete,
closed normal subgroup of~$G$ (and thus open)
such that~$H'$ is discrete (which contradicts
the hypothesis of Case~2).
Thus indeed $\Rad(Q)$ is discrete.
Since $L(Q)=\cg$ and $\Ad_Q\circ q=\Ad_G$,
where $q$ is surjective
and the kernel is $\ker\Ad_G$ only,
we deduce that $\Ad_Q$ is injective.
In particular, $Q$ is a \emph{linear} $p$-adic Lie group.
Let~$\rr:=\rad(\cg)$ be the radical of $\cg=L(Q)$.
Since~$Q$ is a linear $p$-adic
Lie group, the discreteness of~$\Rad(Q)$
implies~$\rr=\{0\}$ (see Lemma~\ref{radLie}).
Thus~$\cg$ is semisimple.
If $S\sub Q$ is a non-trivial closed subnormal
subgroup, then Lemma~\ref{snclosed} provides a series
\[
Q=Q_0\rhd Q_1\rhd\cdots\rhd Q_k=S
\]
of closed subgroups of~$Q$.
Then $Q_j$ is open in~$Q$ or discrete,
for each $j\in \{0,\ldots, k\}$.
We show by induction on~$j$
that $Q_j$ is open in~$Q$. Since $Q_0=Q$,
this is trivial if $j=0$.
If $j\in\{1,\ldots, k\}$
and $Q_{j-1}$ is open in~$Q$,
let us show~$Q_j$ is open in~$Q$.
If not, then $Q_j$ was discrete and thus
$P:=Q_{j-1}/Q_j$ would be a group
with Lie algebra~$\cg$. Let $p\colon Q_{j-1}\to P$ be the quotient map.
Then $\Ad_P\circ \, p=\Ad_{Q_{j-1}}$ where $\Ad_{Q_{j-1}}=\Ad_Q|_{Q_{j-1}}$ is
injective. As a consequence, $p$ must be injective and thus
$Q_j=\{1\}$, contrary to~$Q_j\supseteq S\not=\{1\}$.
Hence~$Q_j$ is open in~$Q$ and thus also~$S=Q_k$ is open,
by induction.

It only remains to show that~$\cg$ is simple.
Write $\cg=\cs_1\oplus\cdots\oplus \cs_m$
with simple ideals $\cs_1,\ldots, \cs_m$.
Let $W\sub\Aut(\cg)$ be the group
of all $\phi\in \Aut(\cg)$ such that
$\phi(\cs_j)=\cs_j$ for all $j\in \{1,\ldots, m\}$.
Then~$W$ is a normal subgroup
of finite index in $\Aut(\cg)$
(complemented by a finite group
of permutations), as
follows from Corollary~1 to Proposition~2 in
\cite[Chapter~II, \S6, no.\,2]{Bou}.
As a consequence, $V:=\Ad_Q^{-1}(W)$
is an open normal subgroup in~$Q$.
We identify~$W$ with $\Aut(\cs_1)\times\cdots\times\Aut(\cs_m)$
and let $\pi_j\colon W\to\Aut(\cs_j)$ be the canonical projection.
Let $\exp$ be an exponential function for~$V$ (see
\cite[Chapter~III, \S7, no.\,2]{Bou}).
Let $k\in \{1,\ldots, m\}$.
If we had $m\geq 2$,
we could pick $j\in \{1,\ldots, m\}\setminus\{k\}$.
Then
\begin{equation*}
\Ad_V(\exp(x))(y)=e^{\ad(x)}(y)\in y+[\cs_j,\cs_k]=\{y\}
\end{equation*}
for all $x$ in a $0$-neighbourhood in~$\cs_j$ and all $y\in \cs_k$,
showing that
\begin{equation*}
(\pi_k\circ \Ad_V)(\exp(x))=\id_{\cs_k}
\end{equation*}
for all~$x$ in an identity neighbourhood in~$\cs_j$.
Thus $\ker(\pi_k\circ \Ad_V)$
would be a non-discrete closed normal
subgroup of~$V$ and hence
open in~$V$ (as $V$ is nearly simple by Lemma~\ref{furtherla}).
But then $\bigcap_{k=1}^m\ker(\pi_k\circ \Ad_V)=\ker\Ad_V$
is open, contradiction. Hence $m=1$ and~$\cg=\cs_1$ is simple.
We have all of~(d).\Punkt\\[2mm]
{\bf Proof of Proposition~\ref{propo3}.}
By Lemma~\ref{clearexa}, $G$ is nearly simple.
As the mapping $\Ad_G\colon G\to\Aut(\cg)$
is an injective continuous homomorphism
and $L(\Aut(\cg))\isom\cg$ (compare the theorem in \cite[\S5.3]{Hum}),
we see that $\Ad_G$ is \'{e}tale (i.e., a local diffeomorphism).
Being also injective,
$\Ad_G$ is an isomorphism of Lie groups onto
an open (and hence closed) subgroup of~$\Aut(\cg)$.
Let $G^\dag$ be the Tits core of~$G$, i.e.,
the subgroup generated by the contraction groups
\begin{equation*}
U_g:=\{x\in G\colon \mbox{ $g^nxg^{-n}\to 1$ as $n\to+\infty$ }\}
\end{equation*}
for $g\in G$ (which are closed since $G$ is a $p$-adic
Lie group \cite{Wan}),
as in \cite{CRW}. Then $G^\dag$ is topologically characteristic in~$G$
and hence so is its closure $M:=\wb{G^\dag}$.
In particular, $M$ is normal in~$G$.
Thus~$M$ is open in~$G$ or $M=\{1\}$
(see Proposition~\ref{propo2}\,(d)).

\emph{Case} 1: $M=\{1\}$. Then $G^\dag=\{1\}$,
whence $U_g=\{1\}$ for each $g\in G$
and thus $s_G(g)=1$, i.e., the scale function $s_G\colon G\to\N$
(as defined in \cite{Wil}) is identically~$1$
(cf.\ \cite[Proposition~3.21\,(3)]{BaW}).
Then each eigenvalue of $\Ad_G(g)$
in an algebraic closure of~$\Q_p$
has absolute value~$1$ (see \cite[Corollary~3.6]{FOR}).
This entails that $\Ad_G(g)$ is an isometry for
some choice of ultrametric norm on~$\cg$ (see Lemma~3.3 in~\cite{FOR}
and its proof). As a consequence,
the subgroup generated by $\Ad_G(g)$ is relatively compact.
Since $\Ad_G(G)$ is closed,
we deduce with Lemma~\ref{genparreau}
that $\Ad_G(G)$ is compact. We have~(a).

\emph{Case} 2: Assume that $M$ is open in~$G$.
Then $M\not=\{1\}$.
Since each non-trivial contraction group~$U_g$
is non-compact, also $M$ is non-compact.
Note that $L(M)=\cg$ is simple and $\Ad_M=\Ad_G|_M$
is injective (i.e., also
$M$ satisfies the hypotheses of Proposition~\ref{propo3}).
Since $M$ is non-compact, it does not satisfy the
conclusion of part (a) of the proposition. Hence
$\wb{M^\dag}$
has to be open in~$M$ (applying the preceding arguments to $M$
in place of $G$) and hence open in~$G$,
where $M^\dag$ is the subgroup of~$M$
generated by $M\cap U_x=U_x$ with $x\in M$.
As an auxiliary result, let us show that
\begin{equation*}
M=\wb{M^\dag}.
\end{equation*}
Since $\wb{M^\dag}$ is topologically characteristic
in~$M$, it is normal in $G$.
For each $g\in G$ and $x\in U_g$,
we have $g^nxg^{-n}\in \wb{M^\dag}$ for some
$n\in \N_0$ (as $\wb{M^\dag}$ is an identity neighbourhood in~$G$)
and thus $x\in g^{-n}\wb{M^\dag}g^n=\wb{M^\dag}$.
Thus $U_g\sub \wb{M^\dag}$, entailing that
$M=\wb{G^\dag}\sub \wb{M^\dag}$. Thus $M=\wb{M^\dag}$
indeed.

We now show that $M$ is topologically simple.
If $N\sub M$ is a non-trivial closed normal subgroup,
then $N$ is subnormal in~$G$.
Hence~$N$ is open in~$G$.
If $g\in M$ and $x\in U_g$,
then $g^nxg^{-n}\in N$ for some $n\in \N_0$.
Thus $x\in g^{-n}Ng^n=N$ (using that~$N$ is normal in~$M$).
Hence $U_g\sub N$, whence $M^\dag\sub N$ and $M=\wb{M^\dag}\sub N$.
Therefore $N=M$ and topological simplicity is established.
Finally, by  \cite[Proposition 6.5]{Clu},
there exists a group $R:=S(\Q_p)$
of $\Q_p$-rational points
of a simple isotropic $\Q_p$-algebraic group~$S$
such that
$M \cong (R^\dag)^n/Z$ for some $n\in \N$ and closed central
subgroup~$Z\sub (R^\dag)^n$. Since $R^\dag$ is compactly
generated~\cite[Chapter~I, Corollary 2.3.5]{Mar},
also~$M$ is compactly generated.

To complete the proof, recall that
an elementary group that is topologically simple
and compactly generated
must be discrete~\cite[Proposition~6.5]{We2}.
If~$M$ is as in (b) and~$G$ is elementary, then also
the closed subgroup~$M$ is elementary (see \ref{perm}).
As~$M$ is non-discrete,
this contradicts the cited proposition.\Punkt
\section{Proofs for Theorems \ref{thmA} and \ref{thmC}}
{\bf Proof of Theorem \ref{thmA}.}
We may assume that $G$ is non-discrete.
By Proposition~\ref{propo1},
$G$ admits a series $G=H_0\rhd H_1\rhd \cdots\rhd H_m=\{1\}$
of closed subgroups such that $H_{j-1}/H_j$
is nearly simple for all $j\in \{1,\ldots,m\}$.
The proof is by induction on~$m\in\N$.
If $m=1$, then $G$ is nearly
simple. Thus $L(G)$ is abelian or simple,
by Proposition~\ref{propo2}.
Case~1:
If $L(G)$ is abelian,
then Proposition~\ref{propo2}\,(b) or~(c)
yields a series of the desired form.
Case~2: If $L(G)$ is simple,
set $Q:=G/\ker\Ad_G$.
Then
Propositions~\ref{propo2}\,(d)
and~\ref{propo3}
show that $\ker\Ad_G$ is discrete
and either

(i): $Q$ is isomorphic to a compact open subgroup
in $\Aut(\cg)$ (in which case $G\rhd \ker\Ad_G\rhd \{1\}$
is as desired); or

(ii): $Q$ has an open, topologically characteristic subgroup~$M$
as in Proposition~\ref{propo3}\,(b),
in which case we let $q\colon G\to Q$ be the canonical quotient map
and take $G\rhd q^{-1}(M)\rhd \ker\Ad_G\rhd\{1\}$.

\hspace*{-.5mm}By\hspace*{-.3mm} Proposition\,\ref{propo3},
\hspace*{-.5mm}(ii) \hspace*{-.3mm}cannot \hspace*{-.3mm}occur if~$G$ is elementary, as~$Q$
is elementary by \ref{perm}.

\emph{Induction step.}
If $m>1$, we have a series
$G/H_1=P_0\rhd P_1\rhd\cdots\rhd P_k=\{1\}$
of the desired form (by the case $m=1$).
We let $q\colon G\to G/H_1$ be the quotient map
and define $G_j:=q^{-1}(P_j)$ for $j\in \{0,\ldots,k\}$.
Also, by the case $m-1$ we have a series
$H_1=G_k \rhd G_{k+1}\cdots\rhd G_n=\{1\}$
of the desired form,
for some $n>k$.
Then $G=G_0\rhd G_1\rhd\cdots\rhd G_n=\{1\}$
is as desired.\Punkt
\begin{la}\label{obviousla}
Let $G$ be an elementary group and
$G=G_0\rhd G_1\rhd\cdots\rhd G_n=\{1\}$
be a series of closed subgroups,
with $n\in \N$.
Then $G_j$ and
$Q_j:=G_{j-1}/G_j$ are elementary
for all $j\in \{1,\ldots,n\}$.
If $\erk(Q_j)$ is finite for all
$j\in \{1,\ldots,n\}$,
then also $\erk(G)$ is finite
and
$\erk(G)\leq n-1 +\sum_{j=1}^n \erk(Q_j)$.
\end{la}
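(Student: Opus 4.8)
The plan is to prove the two assertions separately, first establishing that all the $G_j$ and $Q_j$ are elementary, and then deriving the construction-rank bound by induction on~$n$ using the subadditivity estimate~(\ref{umweg}).

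First I would handle the "elementary" claim. Each $G_j$ is a closed subgroup of the elementary group $G$ (since the series consists of closed subgroups and "closed in a closed subgroup" is closed), hence elementary by the permanence property~\ref{perm}. Then $Q_j = G_{j-1}/G_j$ is a Hausdorff quotient of the elementary group $G_{j-1}$ (the quotient is Hausdorff because $G_j$ is closed in $G_{j-1}$), so it is elementary as well, again by~\ref{perm}.

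Next I would prove the rank bound by induction on $n\in\N$. For $n=1$ the series is $G=G_0\rhd G_1=\{1\}$, so $Q_1 = G/\{1\}\cong G$ and the claimed inequality $\erk(G)\leq 0 + \erk(Q_1) = \erk(G)$ holds trivially (with equality). For the induction step, suppose $n\geq 2$ and the statement holds for series of length $n-1$. Apply the inductive hypothesis to the series $G_1\rhd G_2\rhd\cdots\rhd G_n=\{1\}$ of closed subgroups of the elementary group $G_1$; its subfactors are exactly $Q_2,\ldots,Q_n$, so $\erk(G_1)$ is finite and
\[
\erk(G_1)\ \leq\ (n-2) + \sum_{j=2}^n \erk(Q_j).
\]
Now $G_1$ is a closed normal subgroup of $G$ with $G/G_1 = Q_1$, and both $G_1$ and $Q_1$ are elementary (by the first part), so~(\ref{umweg}) gives
\[
\erk(G)\ \leq\ \erk(G_1) + \erk(Q_1) + 1\ \leq\ (n-2) + \sum_{j=2}^n\erk(Q_j) + \erk(Q_1) + 1\ =\ (n-1) + \sum_{j=1}^n\erk(Q_j),
\]
which is finite since each $\erk(Q_j)$ is. This completes the induction.

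I do not anticipate a genuine obstacle here: the argument is a routine combination of the permanence properties~\ref{perm} with the subadditivity of construction rank~(\ref{umweg}). The only point requiring a little care is making sure that at each stage the relevant subgroup is closed in the ambient group and the relevant quotient is Hausdorff, so that~\ref{perm} and~(\ref{umweg}) apply; both follow immediately from the hypothesis that the $G_j$ are closed in $G$ (hence $G_j$ is closed in $G_{j-1}$).
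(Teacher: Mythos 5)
Your proof is correct and follows exactly the same route as the paper: establish that the $G_j$ and $Q_j$ are elementary via the permanence properties in~\ref{perm}, then induct on~$n$ using the subadditivity estimate~(\ref{umweg}) applied to the extension $G_1\to G\to Q_1$. No meaningful differences.
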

\begin{proof}
$G_j$ and $Q_j$ are elementary by \ref{perm}.
If $n=1$, then $G\cong Q_1$
and thus $\erk(G)=\erk(Q_1)=n-1+\erk(Q_1)$.
If $n>1$, then $\erk(G_1)\leq n-2$\linebreak
$+\sum_{j=2}^n\erk(Q_j)$
by induction and thus $\erk(G)\leq \erk(G_1)+\erk(Q_1)+1
\leq n-1+\sum_{j=1}^n\erk(Q_j)$,
using (\ref{umweg}).
\end{proof}
{\bf Proof of Theorem~\ref{thmC}.}
Choose a series
$G=G_0\rhd G_1\rhd \cdots\rhd  G_n=\{1\}$
as in Theorem~\ref{thmA},
and set $Q_j:=G_{j-1}/G_j$ for $j\in \{1,\ldots,n\}$.
Then $\erk(Q_j)=0$
if $Q_j$ is discrete
or compact
(which holds in the cases (a), (b) and (d) of Theorem~\ref{thmA}).
Otherwise (in case~(c) of Theorem~\ref{thmA}),
$Q_j$ is extraordinary and thus (since~$Q_j$
is elementary), $\erk(Q_j)\leq 2$
by Proposition~\ref{propo4}.
Let $e$ be the number of indices~$j$
such that $G_j$ is extraordinary.
Combining the preceding estimates for $\erk(Q_j)$
with Lemma~\ref{obviousla},
we get $\erk(G)\leq n-1+2e$.\Punkt
\begin{rem}\label{probambi}
The author does not know whether
extraordinary $p$-adic Lie groups
exist. For which dimensions do they exist?
For which dimensions
can we have extraordinary $p$-adic Lie groups
which are $\sigma$-compact and hence
elementary?
\end{rem}
\begin{rem}\label{cfold}
If a $p$-adic Lie group~$G$ is extraordinary, then the radical $\Rad(S)$ is discrete
for each open subnormal subgroup $S$ of~$G$.
In fact, if $\Rad(S)$ was non-discrete (and thus open),
we could choose a soluble, open, normal subgroup~$N$ of~$S$
(by Lemma~\ref{lemrad}).
Choose $k\in \N_0$ maximal such that $N^{(k)}$ is non-discrete.
Then $H:=\overline{N^{(k)}}$ would be an open subnormal
subgroup of~$G$ such that $H'=N^{(k+1)}$
is discrete, contradicting the fact that $G$ is assumed
to be extraordinary.
\end{rem}
\begin{rem}\label{linambi}
A linear $p$-adic Lie group $G$
cannot be extraordinary because
$L(\Rad(G))$ would be the radical of~$L(G)$
in this case (by Lemma~\ref{radLie})
and thus $L(\Rad(G))=L(G)$
(as $L(G)$ is abelian).
Since $\Rad(G)$ is discrete (see Remark~\ref{cfold}),
$L(G)=\{0\}$ would follow
and thus~$G$ would be discrete
(contradicting the hypothesis that~$G$
is extraordinary).
\end{rem}
\begin{rem}\label{solurem}
\emph{If $G$ is a soluble $p$-adic Lie group
with derived length $\ell$,
then $\erk(G)\leq 2\ell-1$.}
To see this, consider
the derived series $G=G^{(0)}\rhd G^{(1)}\rhd\cdots\rhd G^{(\ell)}=\{1\}$.
We pick a compact open subgroup $U_j$
of the $p$-adic Lie group $Q_j:=G^{(j-1)}/{G^{(j)}}$.
Since $Q_j$ is abelian, $U_j$ is normal
in $Q_j$ and thus $\erk(Q_j)\leq 1$.
Hence $\erk(G)\leq \ell-1+\sum_{j=1}^\ell \erk(Q_j)\leq 2\ell-1$, by
Lemma~\ref{obviousla}.
\end{rem}
\section{Proof of Theorem~\ref{thmref}}
{\bf Proof of Theorem~\ref{thmref}.}
If $\theta\in\Aut(G)$ and $g\in G$, then
\[
\theta(g)h\theta(g)^{-1}=\theta(g\theta^{-1}(h)g^{-1})
\]
for all $h\in G$, whence $I_{\theta(g)}=\theta\circ I_g\circ \theta^{-1}$
and thus
\begin{equation}\label{reuhu}
\Ad_{\theta(g)}=L(\theta)\circ\Ad_g\circ L(\theta)^{-1}.
\end{equation}
Hence $\theta(K)\sub K$, whence $K$ is topologically characteristic in~$G$.
Since $\rr$ is characteristic in~$\cg$, we have $L(\theta)(\rr)=\rr$
and so $L(\theta)$ induces an automorphism
\begin{equation}\label{thetbar}
\wb{\theta}\in\Aut(\cs),
\end{equation}
determined by $\wb{\theta}(x+\rr):=L(\theta)(x)+\rr$ for $x\in\cg$.
As a consequence of (\ref{reuhu}), we have
\begin{equation}\label{reusecharac}
\phi(\theta(g))=\wb{\theta}\circ \phi(g)\circ\wb{\theta}^{-1}.
\end{equation}
Hence $\theta(R)\sub R$, whence $R$ is topologically characteristic in~$G$.

(a) Since $\ck:=L(K)=L(\ker\Ad_G)=\ker L(\Ad_G)=\ker\ad=\cz(\cg)$
is the centre of~$\cg$, we see that $\ck$ is abelian.
Thus~$K$ has an open abelian subgroup~$A$.
Let $V_1\supseteq V_2\supseteq \cdots$
be a descending sequence of compact open subgroups of~$K$
which form a basis of identity neighbourhoods.
If $g\in K$, then $L(I_g)=\Ad_g=\id_\cg$ implies that $I_g|_{V_n}=\id_{V_n}$
for some~$n$, whence $g$ is in the centralizer $C_K(V_n)$ of $V_n$ in~$K$.
Thus $K$ is an ascending union
\begin{equation}\label{isunio}
K=\bigcup_{n\in\N} C_K(V_n).
\end{equation}
Now assume that $K$ is $\sigma$-compact.
As $V_n\cap C_K(V_n)$ is a compact open normal subgroup of~$C_K(V_n)$,
the latter has construction rank $\leq 1$ and thus $K$ has construction rank $\erk(K)\leq 2$,
by (\ref{isunio}).

(b) We have $R=\ker(\phi)$ for $\phi\colon G\to\Aut(\cg/\rr)$, $\phi(g)(y+\rr):=\Ad_g(y)+\rr$
for $g\in G$, $y\in\cg$. Then $L(\phi)\colon \cg\to\der(\cg/\rr)$ is given by
\begin{equation*}
L(\phi)(x)(y+\rr)=[x,y]+\rr\quad\mbox{for all $x,y\in\cg$.}
\end{equation*}
Hence
\begin{equation}\label{make-eas}
L(R)=\ker L(\phi)=\{x\in\cg\colon (\forall y\in\cg)\; [x,y]\in\rr\}.
\end{equation}
It is clear from (\ref{make-eas}) that $\rr\sub L(R)$.
Since $L(R)/\rr$ is abelian by (\ref{make-eas}) and $\rr$ is soluble, also
$L(R)$ is soluble. Since $L(R)$ is an ideal, $L(R)\sub\rr$ follows and thus $L(R)=\rr$.
Hence $R$ has an open soluble subgroup.
Let $\exp\colon W\to G$ be an exponential function for~$G$
(where $W$ is an open $\Z_p$-submodule of~$\cg$)
and $e^x:=\exp(x)$ for $x\in W$.
If $N\sub G$ is a closed normal subgroup with $L(N)=\rr$,
let $g\in N$ and $x\in W$. Then
\[
f\colon\Z_p\to N,\;\; t\mto (e^{-tx}ge^{tx})g^{-1}
\]
is an analytic map with $f'(0)\in L(N)=\rr$.
Hence
\begin{equation*}
\Ad_g(x)=L(I_g)(x)=\frac{d}{dt}\Big|_{t=0}(ge^{tx}g^{-1})=\frac{d}{dt}\Big|_{t=0}(e^{tx}f(t))=
x+f'(0)\in x+\rr,
\end{equation*}
whence $g\in R$ and thus $N\sub R$.

Let $q\colon G\to G/K$, $g\mto gK$ be the canonical quotient map.
Since $\ck$ is soluble, we have $\rad(\cg/\ck)=\rr/\ck$. As the continuous homomorphism
\begin{equation*}
G/K\to\Aut(\cg), \quad gK\mto\Ad_g
\end{equation*}
is injective, $G/K$ is a linear Lie group.
Hence $\Rad(G/K)$ is a characteristic, soluble closed subgroup of $G/K$
with $L(\Rad(G/K))=\rad(\cg/\ck)=\rr/\ck$.
Now $R_0:=q^{-1}(\Rad(G/K))$ is a closed normal subgroup
of~$G$ with $L(R_0)=\rr$, whence~$R_0$ is an open subgroup of~$R$.
Moreover, $R_0/K\cong \Rad(G/K)$ is soluble. Since $\ker(q)=K$ is topologically characteristic in~$G$
and $\Rad(G/K)$ is topologically characteristic in $G/K$, it follows that $R_0=q^{-1}(\Rad(G/K))$
is topologically characteristic in~$G$.

Let $\ell$ be the derived length of $R_0/K$.
If $R$ is $\sigma$-compact,
then $\erk(R_0/K)\leq 2\ell-1$ by Remark~\ref{solurem}. Since $\erk(K)\leq 2$ by (a),
we deduce with (\ref{umweg})
that $\erk(R_0)\leq 2\ell+2$ and $\erk(R)\leq 2\ell+3$.

(c) The Levi decomposition for $\cg$ entails that $\cs:=\cg/\rr$
is a semi-simple Lie algebra. If $J_1,\ldots, J_m$ are the isotypic
components of~$\cs$, then
\begin{equation*}
\Aut(\cs)\cong \Aut(J_1)\times\cdots\times\Aut(J_m).
\end{equation*}
For $i\in\{1,\ldots, m\}$, we have $J_i\cong \ce_i^{m_i}$ with $m_i\in\N$ and a simple Lie algebra $\ce_i$. Hence $\Aut(J_i)$ is a wreath product
\begin{equation*}
\Aut(J_i)=\Aut(\ce_i)^{m_i}\rtimes S_{m_i}
\end{equation*}
with $S_{m_i}$ the symmetric group of all permutations of $\{1,\ldots, m_i\}$ (as is well known).
We write $\Aut(\cs)_\delta$ for the normal subgroup of $\Aut(\cs)$ corresponding
to
\begin{equation*}
\prod_{i=1}^m\Aut(\ce_i)^{m_i}.
\end{equation*}
Since $\Aut(\cs)_\delta$ is closed in $\Aut(\cs)\sub\GL(\cs)$ and has finite index,
we see that $\Aut(\cs)_\delta$ is open in $\Aut(\cs)$.

Now consider the continuous, injective homomorphism
\begin{equation*}
\psi\colon G/R\to \Aut(\cs),\quad gR\mto \phi(g)
\end{equation*}
induced by~$\phi$; identifying $L(G/R)$ with $\cs=\cg/\rr$,
we have $\psi=\Ad_{G/R}$. Then also the linear map
\[
L(\psi)\colon L(G/R)\to L(\Aut(\cs))
\]
is injective. Since both $L(G/R)$ and $L(\Aut(\cs))\cong\der(\cs)$
are isomorphic to~$\cs$, we see that $L(\psi)$ is an isomorphism.
As $\psi$ is both injective and \'{e}tale,
$\psi$ is an isomorphism of topological groups onto an open subgroup
of $\Aut(\cs)$. Notably, $G/R$ is $\sigma$-compact.

With identifications as above, for $i\in\{1,\ldots, m\}$ and $j\in\{1,\ldots, m_i\}$
let $H_{i,j}$ be the preimage under~$\psi$ of the $j$th copy of $\Aut(\ce_i)$ in
\begin{equation*}
\Aut(\cs)_\delta=\prod_{k=1}^m\Aut(\ce_k)^{m_k}.
\end{equation*}
Then each $H_i$ is a closed normal
subgroup in~$G/R$, whence the subgroup~$H$ generated by the
$H_{i,j}$ is a normal subgroup of~$G/R$. Since $\psi$ is injective, we see that
\begin{equation}\label{alsotop}
H=\prod_{i=1}^m\prod_{j=1}^{m_i}H_{i,j}
\end{equation}
as an internal direct product in the sense of abstract groups (without topology).

Now $G/R$ has a compact open subgroup~$U$ which is an internal
direct product
\begin{equation*}
U=\prod_{i=1}^m\prod_{j=1}^{m_i}U_{i,j}
\end{equation*}
of compact subgroups $U_{i,j}$ whose Lie algebra $L(U_{i,j})\sub L(G/R)\cong\cs$
corresponds to the $j$th copy of $\ce_i$ under our identification of $\cs$ with
$\bigoplus_{i=1}^m\ce_i^{m_i}$.
Let $g\in U_{i,j}$.
Since $U_{i,j}$ centralizes $U_{i',j'}$,
we see that $\psi(g)=\Ad_{G/R}(g)$ is the identity on $L(U_{i',j'})$
for all $(i',j')\not=(i,j)$, and thus $g\in H_{i,j}$.
Hence $U_{i,j}\sub H_{i,j}$ for all $i$ and $j$.
As a consequence, $U\sub H$ and so~$H$ is open in~$G/R$.

Since each $H_{i,j}$ (being closed in $G/R$) is $\sigma$-compact,
the open mapping theorem shows that (\ref{alsotop})
is an internal direct product also in the sense of topological groups.

Let $i,j$ be as before. By Proposition~\ref{propo3} and its proof,
$H_{i,j}$ is either compact (in which case we define $S_{i,j}:=H_{i,j}$),
or $H_{i,j}$ is non-compact, in which case
\begin{equation*}
S_{i,j}:=\wb{H_{i,j}^\dag}
\end{equation*}
is an open, topologically characteristic
subgroup which is topologically simple and compactly generated
(and thus of adjoint simple type, by \cite[Theorem 4.8]{We3}).

Let $S$ be the subgroup of $G/R$ generated by all $S_{i,j}$.
Using (\ref{alsotop}), we see that
\begin{equation}\label{alsoproS}
S=\prod_{i=1}^m\prod_{j=1}^{m_i}S_{i,j}
\end{equation}
as an internal direct product of topological groups.

For every automorphism $\theta$ of $G/R$
and $g\in G/R$, we have
\begin{equation}\label{alsodo}
\psi(\theta(g))=\psi(\Ad_{G/K}(g))=L(\theta)\circ \psi(g)\circ L(\theta)^{-1}
\end{equation}
(cf.\ (\ref{reuhu})).
If $i,j$ as before are given, then $L(\theta)$ takes the $j$th copy of $\ce_i$
to the $k$th copy for some
$k\in\{1,\ldots, m_i\}$, and we deduce with (\ref{alsodo}) that
\begin{equation*}
\theta(H_{i,j})=H_{i,k}.
\end{equation*}
Since $\theta$ restricts to an isomorphism between the topological groups
$H_{i,j}$ and $H_{i,k}$, we deduce that $\theta(S_{i,j})=S_{i,k}$.
Hence $\theta(S)\sub S$, entailing that $S$ is topologically characteristic.

If $H_{i,j}$ is not compact, we consider $\kappa_g\colon S_{i,j}\to S_{i,j}$, $h\mto ghg^{-1}$
for $g\in H_{i,j}$ and obtain a homomorphism
\begin{equation*}
\kappa\colon H_{i,j}\to \Aut(S_{i,j}),\quad g\mto\kappa_g
\end{equation*}
which is injective; this follows from the observation that the homomorphism
\begin{equation*}
H_{i,j}\to\Aut(L(S_{i,j})),
\quad
g\mto L(\kappa_g)=\psi(g)|_{L(S_{i,j})}
\end{equation*}
is injective by injectivity of~$\psi$ and the definition of~$H_{i,j}$.
Hence
\begin{equation}\label{farright}
H_{i,j}/S_{i,j}\cong \kappa(H_{i,j})/\Inn(S_{i,j})\leq \Aut(S_{i,j})/\Inn(S_{i,j}).
\end{equation}
As the quotient group on the very right in (\ref{farright})
is finite by \cite[Theorem 4.15]{We2}, we see that $S_{i,j}$ has finite index
in~$H_{i,j}$. Hence $S$ has finite index in~$H$ (and hence in~$G$),
by (\ref{alsotop}) and (\ref{alsoproS}).

To complete the proof of (c), let $O\sub G$ be the open subgroup
with $R\sub O$ and $O/R=S$, and replace the double indices $(i,j)$
with a single index.

(d) If $G$ is elementary, then it cannot have subfactors of adjoint simple type (see
\cite[Proposition~6.5]{We2}). Thus $S_{i,j}=H_{i,j}$ is compact for all $i$ and $j$ in the proof of~(c),
whence $S=O/R$ is compact. Recalling that $O$ has finite index in~$G$,
also $G/R$ will be compact.

If, conversely, $G/R$ is compact, then $G$ is an extension of the elementary group~$R$
with $\erk(R)\leq 2\ell+3$ (see (b)) by the metrizable pro-finite group $G/R$.
Hence $G$ is elementary with $\erk(G)\leq 2\ell+4$, by~(\ref{umweg}).\Punkt\\[2mm]
\noindent
We mention that $O$ is topologically characteristic in~$G$ as $O/R=S$ is topologically
characteristic in $G/R$ and $R$ is topologically characteristic in~$G$.\\[2mm]
\emph{Acknowledgements.}
I am grateful to P.-E. Caprace, C.R.E. Raja and Ph.\ Wesolek
for discussions, and to the referee for substantial comments.
The core of the text was written
during a visit to the Fields Institute (Toronto) in February 2014,
partially supported by the latter.
\bibliographystyle{amsplain}
{\bf Helge  Gl\"{o}ckner}, Universit\"at Paderborn, Institut f\"{u}r Mathematik,\\
Warburger Str.\ 100, 33098 Paderborn, Germany;\,\,
e-mail: {\tt  glockner\at{}math.upb.de}\vfill
\end{document}